\theoremstyle{plain}
\newtheorem{theorem}{Theorem}[section]
\newtheorem{lemma}[theorem]{Lemma}
\newtheorem{proposition}[theorem]{Proposition}
\newtheorem{corollary}[theorem]{Corollary}
\theoremstyle{definition}
\newtheorem{example}[theorem]{Example}
\newtheorem{conjecture}[theorem]{Conjecture}
\numberwithin{equation}{section}
\newcommand{\R}{\mathbb{R}}
\newcommand{\N}{\mathbb{N}}
\newcommand{\Z}{\mathbb{Z}}
\newcommand{\C}{\mathbb{C}}
\renewcommand{\phi}{\varphi}
\renewcommand{\Im}{\mathrm{Im}}
\title[Embeddings of interval exchange transformations \\ 
into planar piecewise isometries]{Embeddings of interval exchange transformations \\ 
into  planar piecewise isometries}
\author{ Peter Ashwin, Arek Goetz, Pedro Peres and Ana Rodrigues}
\address{PP, PA and AR: Department of Mathematics, University of Exeter, Exeter EX4 4QF, UK}
\address{AG: Department of Mathematics, San Francisco State University, San Francisco, USA}
\date{\today}
\begin{document}

\maketitle

\begin{abstract}
Although \emph{piecewise isometries} (PWIs) are higher dimensional generalizations of one dimensional \emph{interval exchange transformations} (IETs), their generic dynamical properties seem to be quite different. In this paper we consider embeddings of IET dynamics into PWI with a view to better understanding their similarities and differences. We derive some necessary conditions for existence of such embeddings using combinatorial, topological and measure theoretic properties of IETs. In particular, we prove that continuous embeddings of minimal 2-IETs into orientation preserving PWIs are necessarily trivial and that any $3$-PWI has at most one non-trivially continuously embedded minimal $3$-IET with the same underlying permutation. Finally, we introduce a family of $4$-PWIs with apparent abundance of invariant nonsmooth fractal curves supporting IETs, that limit to a trivial embedding of an IET.
\end{abstract}

\section{Introduction}

{\em Interval exchange transformations} (IET) are bijective piecewise translations of an interval divided into a finite partition of subintervals. \emph{Piecewise isometries} (PWIs) \cite{Goetz1,Goetz2} are generalizations of IETs to higher dimension where a region is split into a number of convex sets (usually polytopes) and these are rearranged using isometries. Both IETs and PWIs arise in a number of applications. For example, PWIs in two dimension have been found in models used for signal processing and digital filters \cite{ACP97,Da,Dean06,KWC}, for Hamiltonian systems \cite{S, SHM}, for printing processes \cite{AKMTW} or for other types of geometric dynamics \cite{Schwartz,VV}. PWIs exhibit complex and diverse dynamical behaviour that is far less understood than, and quite different from, that of IETs. There are many results that suggest generic choices of parameters for IETs give ergodicity while many examples suggest that this is rarely the case for PWIs in dimension two or more. In this paper, we discuss the general problem of embedding IET dynamics within PWIs with a particular focus on the regularity of this embedding for two dimensional PWIs. 

IETs were defined by Keane \cite{Ke} and studied for instance in \cite{AOW85,Bos85,Gal85}.  Masur and Veech \cite{M,V1} established unique ergodicity of  typical IETs while Avila and Forni \cite{AF} showed that a typical IET is either weakly mixing or an irrational rotation. It is known that IETs (and suspension flows over IETs with roof function of bounded variation) are not strongly mixing \cite{CFS,KA}.

We define an IET as in \cite{AF} (see also \cite{CFS,Ke}). Let $d \geq 2$ be a natural number and let $\pi$ be an irreducible permutation of $\{1,...,d\}$, that is, such that $\pi(\{1,...,k\})\neq \{1,...,k\}$ for $1\leq k < d$. Let $\mu \in \mathbb{R}_+^d$ and define
\begin{equation}\label{eq0}
x_0=0, \quad x_j=\sum_{k=1}^{j}\mu_k, \quad 1\leq j \leq d.
\end{equation}
We consider an interval $I=\left[x_0, x_d \right)$ partitioned into subintervals $I_j=[x_{j-1},x_j)$ for $1\leq j \leq d$. An {\em interval exchange transformation} (more precisely, a $d$-IET on the interval $I$) is a pair $(I,f_{\mu,\pi})$ where $f_{\mu,\pi}:I\rightarrow I$ is the bijection that rearranges $I_j$ according to $\pi$. For $x \in I_j$ we write $f_{\mu,\pi}=f_j$ where
\begin{equation}\label{eq3}
f_j(x)=x+ \tau_j,
\end{equation}
and $\tau_j=\sum_{\pi(k)<\pi(j)}\mu_k - \sum_{k<j}\mu_k$.

We define a two dimensional, orientation-preserving PWI as follows (see \cite{Goetz1}). Let $r\geq 2$ be a natural number, let $X$ be a subset of $\mathbb{R}^2$ (which we parametrize as $\C$) and let $P=\{X_0,\ldots,X_{r-1}\}$ for $r>1$ be a finite partition of $X$ into convex sets (or {\em atoms}), that is, $\bigcup_{0 \leq i <r} X_i =X$, and $X_i \cap X_j = \emptyset$ for $i \neq j$. We say $(X,T)$ is a {\em piecewise isometry} (more precisely, an orientation preserving $d$-PWI in two dimensions) if $T$ is such that
for $z \in X_j$ we have $T(z)=T_j(z)$ with
$$
T_j(z)=e^{i\theta_j}z+\lambda_j,
$$
for some $\theta_j\in[0,2\pi)$ and $\lambda_j\in\C$, so that $T$ is a piecewise isometric rotation or translation.  There will be a subset of points (maximal invariant set) that remain in $X$ for all forward iterates under $T$. Potentially this set could have dimension less than $2$. Note that the restriction of the atoms to the maximal invariant set need not be convex.

Many examples of PWIs have been studied in recent years; for example \cite{BLP} study a class of piecewise rotations on the square and computed numerically box-counting dimensions, correlation dimensions and complexity of the symbolic language produced by the system. Adler, Kitchens and Tresser \cite{AKT} investigated a specific class of nonergodic piecewise affine maps of the torus and give a decomposition into three invariant sets whose dynamics is very different. They show that the map on one of these invariant set is minimal, uniquely ergodic and an odometer; they also demonstrate a the existence of a full Lebesgue measure set of periodic points. It was proved by Buzzi \cite{Buz} that piecewise isometries have zero topological entropy. Lowenstein and  Vivaldi \cite{LV14} present a computer-assisted proof for renormalizing a one-parameter family of piecewise isometries of a rhombus.

In general, for a given PWI it is helpful to define a partition of $X$ into a {\it regular} and an {\it exceptional set} \cite{AG06a}. If we consider the zero measure set given by the union $\mathcal E$ of all preimages of the set of discontinuities $D$, then we say its closure $\overline{\mathcal E}$ (which may be of positive measure) is called the {\it exceptional set} for the map. The complement of the exceptional set is called the {\it regular set} for the map and consists of disjoint polygons or disks that are periodically coded by their itinerary through the atoms of the PWI. As an example, in \cite{AKT} the authors show for a particular transformation where the rotations are rational, the regular set has full Lebesgue measure and as a consequence, the exceptional set has zero Lebesgue measure. However as highlighted in \cite{AFu} there is numerical evidence that the exceptional set may have positive Lebesgue measure for typical PWIs: certainly this is the case for transformations that are products of mixing IETs.

Even when the exceptional set has positive Lebesgue measure, as noted in \cite{AG06a} there is numerical evidence that Lebesgue measure on the exceptional set may not be ergodic - there can be invariant curves that prevent trajectories from spreading across the whole of the exceptional set.  In \cite{AG06}, a planar PWI whose generating map is a permutation of four cones was investigated, and coexistence of an infinite number of periodic components and of an uncountable number of transitive components was proved.
On these transitive components it was noted that the dynamics is conjugate to a transitive interval exchange. In  \cite{Ash1,AG06a},  similar maps were examined and the existence of a large number of these invariant curves, apparently nowhere smooth, are investigated.


\begin{figure}[t]
	\begin{subfigure}{.49\textwidth}
		\centering
		\includegraphics[width=1\linewidth]{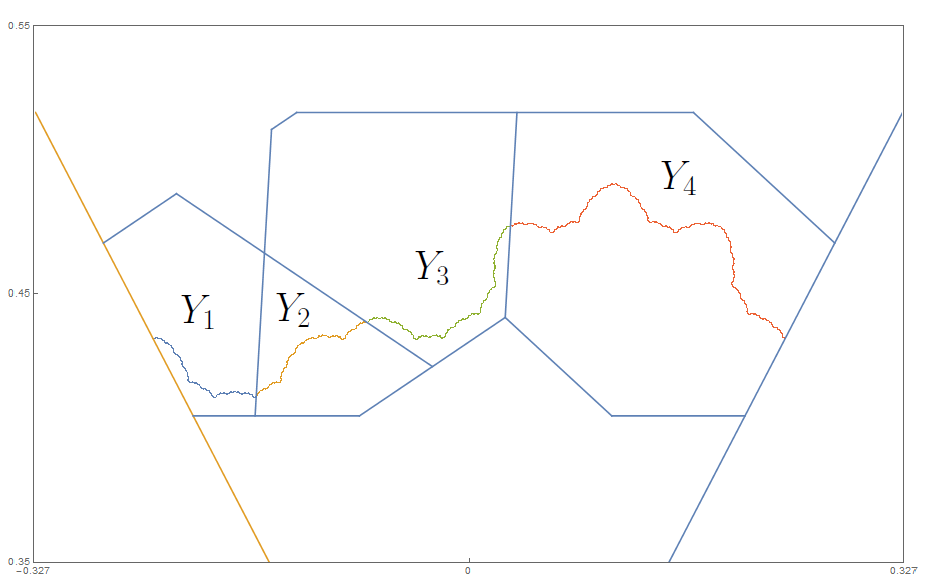}
		\caption{}
		\label{fig:partition}
	\end{subfigure}
	\begin{subfigure}{.49\textwidth}
		\centering
		\includegraphics[width=1\linewidth]{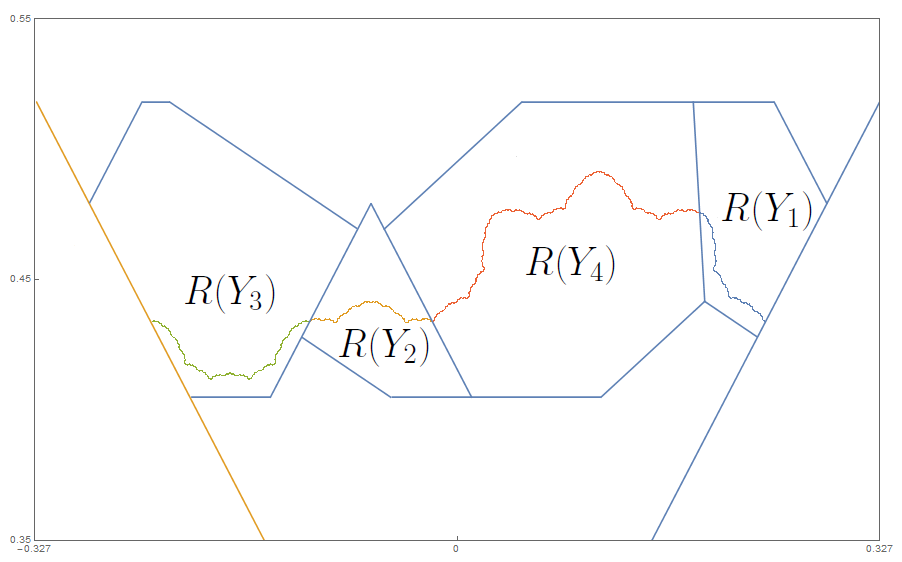}
		\caption{}
		\label{fig:partition1}
	\end{subfigure}
	\centering
	\captionsetup{width=1\textwidth}
	\caption{An illustration of the action of a piecewise isometry $R$ (see \eqref{eq40}), on the image of a non-trivial embedding $Y=\bigcup_{j=1}^{4}Y_j$ of a minimal $4$-IET. (A) An invariant set $Y$ where each $Y_j$ for $j=1,...,4$,  is contained in a polygon. Points in each polygon are mapped isometrically by $R$ to a subset of the region $\{ z\in \mathbb{C}: 0.35< \Im(z)<0.55 \}$. (B) Image of $Y$ and the polygons in (A) under $R$.}
	\label{fig:invariantcurve}
\end{figure}

In this paper we consider general properties of an embedding of an IET into a PWI, and consider conditions for this embedding to be trivial or non-trivial. Our main results are as follows. 
\begin{itemize}
\item In Theorem \ref{thm:necessary} we use combinatorial properties of IETs to prove that in order for a PWI realize a continuous embedding of an IET with the same permutation, its parameters must satisfy a necessary condition: the \emph{parametric connecting equation} \eqref{ce22}.
\item As a consequence of this, Theorem \ref{thm:trivial2IET}, states that all continuous embeddings of minimal 2-IETs are trivial and Theorem \ref{thm:3PWI} asserts that a $3$-PWI has at most one non-trivially continuously embedded minimal $3$-IET with the same underlying permutation.
\item Given an IET embedded into a PWI we, use the derived \emph{tangent exchange map} \eqref{eq9} to prove Theorem \ref{t:ergodiccondition}, which gives a necessary condition on the parameters of a PWI such that there is a continuous embedding of an IET into that PWI. 
\end{itemize}


We introduce a specific example $T$ (\ref{e:pwi4}) of a PWI that has a trivially embedded IET on the boundary. Considering $R$, a first return map under $T$ to a subset of the phase space we observe invariant regions bounded by invariant curves (Figure \ref{fig:invstrip}) and perform numerical experiments to verify the conditions of Theorems \ref{thm:necessary} and \ref{t:ergodiccondition}. We introduce a PWI $T'$ (see \eqref{Tprime}) on 3 atoms that apparently exhibits a single invariant curve that is a non-trivial embedding of a $3$-IET into $T'$.
Using this we make specific conjectures about the nature of non-trivial embeddings of IETs in PWIs.

This paper is organized as follows.  In Section~\ref{sec:embeddings} we consider possible embeddings of a transitive IET into a PWI, and make some definitions regarding their regularity. We identify trivial cases of embedding as where the image of the embedding is either a union of lines or of arcs of the same radius. Furthermore, we extend the Rauzy-Veech induction for IETs to PWIs that admit continuous embeddings of IETs. In Section~\ref{sec:connecting} we introduce some combinatorial conditions on the embedding of an IET into a PWI and state a necessary condition for existence of continuous embeddings. 
Using these techical tools, we prove  that only trivial embeddings of $2$-IETs are possible and that a $3$-PWI has at most one non-trivially continuous embedded $3$-IET with the same underlying permutation.
In Section~\ref{sec:ergodic} we turn to ergodic properties of the embeddings and in Theorem~\ref{t:ergodiccondition} give a necessary condition for embedding in terms of average returns. In Section~\ref{sec:example} we introduce concrete examples of PWIs and show numerical results. We introduce a PWI on 3 atoms, illustrate some examples of orbits for this piecewise isometry and numerically estimate the parameters of a $3$-IET which is embedded into this PWI. We also introduce a particular planar $4$-PWI illustrated in Figure~\ref{fig_iet_cones} that is an ``IET with a twist''. This transformation has a trivially embedded $2$-IET on a line that we call the \emph{baseline} and arbitrarily close to this baseline there are non-trivial rotations. The dynamics of points close to this baseline is remarkably rich. In particular, numerical simulations suggest that the baseline is an accumulation for non-smooth invariant curves that are non-trivial embeddings of $4$-IETs in the $4$-PWI.  We illustrate some examples of orbits for this piecewise isometry and show numerical evidence for abundance of periodic orbits for certain regions of the parameters. We show that the parameters of this map satisfy the restrictions from Theorem~\ref{thm:necessary}. We numerically verify that the condition from Theorem~\ref{t:ergodiccondition} is satisfied. 
 Section~\ref{sec:discussion} is a discussion that considers some open questions and possible generalisations of these results.

\section{Symbolic, topological and differentiable embeddings}
\label{sec:embeddings}

In this section we introduce some definitions of various regularity properties that characterize an embedding of an IET into a PWI. The weakest of these is a symbolic embedding. Furthermore, we extend  Rauzy-Veech induction for IETs to PWIs that admit continuous embeddings of IETs.

Consider a $d$-IET $(I,f_{\mu,\pi})$ which we sometimes denote by $(I,f)$ when parameters are clear from context. For a  point $x \in I$ we define the {\em itinerary} or {\em symbolic encoding} of $x$ by the IET as
\begin{equation}\label{eq6}
i(x)=i_0 i_1\ldots \in \Sigma(d),
\end{equation}
where $\Sigma(d)$ represents one-sided sequences of elements and $i_k\in\{1,\cdots,d\}$ is such that $f^k(x) \in I_j$ if and only if $i_k=j$. 

Similarly, suppose that $(X,T)$ is a $d$-PWI with atoms $\{X_j\}_{j=1}^d$. We define the itinerary of a point $z \in X$ by the PWI as
\begin{equation}
i'(z)=i'_0 i'_1... \subset \Sigma(d)
\end{equation}
where $i'_k\in\{1,\cdots,d\}$ is such that $T^k(x) \in X_j$ if and only if $i'_k=j$.

We now introduce some definitions that will be used throughout this paper.

An injective map $h:I\rightarrow X$ is a {\em symbolic embedding} of $(I,f)$ into $(X,T)$ if $h(I)\subset X$ is an invariant set for $(X,T)$ and there is a numbering of the atoms such that 
 \begin{equation*}
 i'\circ h(x)=i(x)~~\mbox{ for all }x\in I.
 \end{equation*}

An injective map $h:I\rightarrow X$ is a {\em piecewise continuous embedding} of $(I,f)$ into $(X,T)$ if $h(I)\subset X$ is an invariant set for $(X,T)$, $h|_{I_j}$ is a homeomorphism for each $j$, and $h(I_j)\subset X_j$. In this case note that
\begin{equation}\label{eq0s2}
h\circ f(x) = T\circ h(x)~~\mbox{ for all }x\in I.
\end{equation}

If $(I,f)$ has a piecewise continuous embedding $h$ into $(X,T)$ then it is also a symbolic embedding, but the converse does not necessarily hold (to see this, note that $h(I)$ need not be closed if it is a disconnected union of disjoint orbits). If $h$ is a piecewise continuous embedding that is continuous on $I$, we say it is a {\em continuous embedding}, otherwise we say it is a \emph{discontinuous} embedding.

We say $h$ is a {\em differentiable embedding} if it is a piecewise continuous embedding and $h|_{I_j}$ is continuously differentiable. 

We characterize certain differentiable embeddings as, in some sense, trivial. A piecewise continuous embedding of $(I,f)$ by $h$ into $(X,T)$ is a {\em linear embedding} if there are $z_j,v_j\in\C$ such that
\begin{equation}\label{eq1s2}
h|_{I_j}(x)=z_j+v_j x,
\end{equation}
for all $x \in I$, and is an {\em arc embedding} if there are $z_j\in\C$, $r_j>0$ and $a_j,b_j\in \R$ such that
\begin{equation}\label{eq2s2}
h|_{I_j}(x)=z_j+r_j\exp[i (a_j x+b_j)],
\end{equation}
for all $x \in I$. We say an embedding is {\em trivial} if it is a linear embedding or an arc embedding, otherwise it is {\em non-trivial}.


\begin{lemma}\label{l1s2}
For any $d$-IET $(I,f)$ there exists a trivial continuous embedding $h:I\rightarrow X$ of $(I,f)$ into a $d$-PWI $(X,T)$, which can be either a linear embedding or an arc embedding. Suppose in addition that $(I,f)$ is minimal. (a) If $h$ is a linear embedding then $|v_j|$ is independent of $j$. (b) If $h$ is an arc embedding then $r_j$ and $a_j$ are independent of $j$.
\end{lemma}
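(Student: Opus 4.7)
The statement decomposes into two largely independent parts: the existence of trivial continuous embeddings (both linear and arc varieties), and the constancy assertions under the minimality hypothesis. My plan is to exhibit explicit model PWIs for existence, and for (a) and (b) to compare coefficients in the conjugation identity $h \circ f = T \circ h$ and then propagate the resulting equalities using minimality.

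For existence I would give two model constructions. For a linear embedding, take $X = [0, x_d] \times [-\eps, \eps] \subset \C$ partitioned into the convex vertical strips $X_j = [x_{j-1}, x_j) \times [-\eps, \eps]$, and define $T_j(z) = z + \tau_j$ (pure translation, $\theta_j = 0$). Then $h(x) = x$ (viewed as a real point in $\C$) is a continuous linear embedding, since $h \circ f(x) = x + \tau_j = T_j \circ h(x)$ on each $I_j$ by \eqref{eq3}. For an arc embedding, fix $r > 0$ large enough that $\mu_j/r \le \pi$ for each $j$ and $x_d/r \le 2\pi$, take $X$ to be a closed disk of radius $R \ge r$ around the origin, and partition a suitable portion into the convex pie slices $X_j = \{\rho e^{i\phi} : 0 \le \rho \le R,\ \phi \in [x_{j-1}/r, x_j/r)\}$, with $T_j(z) = e^{i\tau_j/r} z$. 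Then $h(x) = r \exp(ix/r)$ is a continuous arc embedding with the common parameters $r_j = r$, $a_j = 1/r$, $b_j = 0$, $z_j = 0$.

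For (a) and (b) I would begin with the conjugation $h \circ f = T \circ h$ restricted to those $x$ in the interior of $I_j$ whose image $f(x)$ lies in the interior of some $I_k$; this furnishes an identity of two analytic expressions in $x$ on an interval of positive length. In case (a), substituting \eqref{eq1s2} yields $z_k + v_k(x + \tau_j) = e^{i\theta_j}(z_j + v_j x) + \lambda_j$, and matching the coefficient of $x$ gives $v_k = e^{i\theta_j} v_j$, so $|v_k| = |v_j|$. In case (b), substituting \eqref{eq2s2} and equating the oscillatory $x$-dependent parts forces $a_k = a_j$, after which matching moduli of the remaining (nonzero) coefficients gives $r_k = r_j$. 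To conclude, I would encode these adjacencies in the directed graph $G$ on $\{1, \dots, d\}$ with an edge $j \to k$ whenever $f(I_j) \cap I_k$ has positive measure; minimality of $(I,f)$ ensures that every forward orbit is dense and so visits every atom from every starting atom, which is precisely the statement that $G$ is strongly connected. Propagating the local equalities along directed paths in $G$ then yields constancy of $|v_j|$ in case (a), and of $r_j$ and $a_j$ in case (b).

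The main obstacle, such as it is, is checking that the model constructions truly satisfy the formal definition of a $d$-PWI, namely that the atoms are convex and the prescribed isometries fit together into a well-defined map whose orbit closure of $h(I)$ is invariant. Once this is dealt with, the constancy statements reduce to coefficient matching followed by a short graph-theoretic propagation, with no further analytic difficulty.
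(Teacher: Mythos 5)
Your proposal is correct and follows essentially the same route as the paper: explicit strip/pie-slice model constructions for existence, then coefficient matching in the conjugacy identity $h\circ f = T\circ h$ on an interval of positive length, with minimality used to relate all the atoms. The only cosmetic difference is that you propagate one-step equalities along a strongly connected transition graph, whereas the paper compares $h$ at $x$ and at $f^{N_q}(x)$ directly for a point whose orbit visits every atom; both are sound.
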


\begin{proof}
Assume without loss of generality that $I\subset [0,\pi)$. Note that there exists a linear embedding with rectangular atoms such that $T(x+iy)=f(x)+iy$, and there exists an arc embedding such that $T(re^{i\theta})=re^{i f(\theta)}$ .

We now prove (a) and (b). Fix $x \in I_p$ for some $p \in \{1,...,d\}$. Since $(I,f)$ is minimal, for all $q \in \{1,...,\hat{p},...,d \} $ there is a $N_q>0$ such that $f^{N_q}(x)=x+\tau \in I_q$, with $\tau=\sum_{k=0}^{N_q-1}\tau_{i_k(x)}$.

We begin by proving (a). Assume that $h$ is a linear embedding of $(I,f)$ into $(X,T)$  as in \eqref{eq1s2}. We show that $|v_p|=|v_q|$. By \eqref{eq0s2} and \eqref{eq1s2} we have
\begin{equation}\label{eq3s2}
e^{i \theta_p}(z_p+v_p x)+\lambda_p= z_q + v_q(x+\tau).
\end{equation}
Differentiating \eqref{eq3s2} with respect to $x$ gives $e^{i\theta_q} v_p = v_q$, thus $|v_p|=|v_q|$.

We now prove (b). Assume that $h$ is an arc embedding of $(I,f)$ into $(X,T)$ as in \eqref{eq2s2}. We show that $a_p=a_q$ and $r_p=r_q$. Combining \eqref{eq0s2}, \eqref{eq2s2} and differentiating with respect to $x$ we get
\begin{equation*}\label{eq4s2}
i r_p a_p \exp[i(\theta_p+a_p x+b_p)]=i r_q a_q \exp[i(a_q x+a_q \tau+b_q)],
\end{equation*}
and taking modulus gives
\begin{equation}\label{eq5s2}
r_p |a_p|= r_q |a_q|,
\end{equation}
while the argument gives
\begin{equation}\label{eq6s2}
\theta_p+a_p x+b_p=a_q x+a_q \tau+b_q \mod 2\pi.
\end{equation}
Note that \eqref{eq6s2} holds for any $x \in f^{-N_q}(I_q)\cap I_p$. Since this set contains an interval, \eqref{eq6s2} must hold for infinitely many values of $x$, hence we get $a_p=a_q$. Together with \eqref{eq5s2} this shows that $r_p=r_q$, completing the proof.
\end{proof}

The next theorem allow us to characterize the existence of continuous or discontinuous embeddings just in terms of the preimages of interior discontinuities of $f$.

\begin{theorem}
	Assume that $(I,f_{\mu,\pi})$ is a minimal $d$-IET with intervals $I_j=[x_{j-1},x_j)$ for $j=1,\ldots d$.  There exists a $d$-PWI $(X,T)$, such that $(I,f)$ has a discontinuous embedding into $(X,T)$ if and only if
	\begin{equation*}
		f_{\mu,\pi}^{-1}(\{x_1,...,x_{d-1}\})\cap \{x_1,...,x_{d-1}\} \neq \emptyset.
	\end{equation*}
\end{theorem}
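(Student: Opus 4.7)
The plan is to exploit the conjugacy $T\circ h = h\circ f$ together with the fact that each restriction $T|_{X_j}$ is a continuous isometry, so that $T(h(I_j)) = h(f(I_j))$ is an isometric, hence connected, image of the arc $h(I_j)$. This connectedness of the $T$-images of arcs will be the workhorse for the forward direction, while the reverse direction asks for an explicit construction.

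For the forward direction $(\Rightarrow)$, I would assume $h$ is discontinuous at some $x_l \in D := \{x_1,\ldots,x_{d-1}\}$ and exploit the fact that $h$ can fail continuity only at points of $D$. The connectedness of $h(f(I_j)) = T(h(I_j))$ forces the interval $f(I_j)$ to contain no discontinuity of $h$ in its interior. Since the $d$ slots $\{f(I_j)\}$ tile $I$ with endpoints among the codomain cut set $\{y_0=0, y_1, \ldots, y_{d-1}, y_d=x_d\}$, every discontinuity of $h$ must equal some $y_m$, so in particular $x_l = y_m$ for some $m \in \{1,\ldots,d-1\}$. Next I would use that $y_m$ is the left endpoint of the slot $f(I_{\pi^{-1}(m+1)})$, giving $y_m = f(x_{\pi^{-1}(m+1)-1})$: provided $\pi^{-1}(m+1)\geq 2$, the point $x_{\pi^{-1}(m+1)-1}$ lies in $D$ and has $f$-image $x_l \in D$, which exhibits the desired element of $f^{-1}(D)\cap D$.

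For the reverse direction $(\Leftarrow)$, given $x_k \in D$ with $f(x_k) = x_l \in D$, I would build a $d$-PWI by hand. Put $h(x) = x$ for $x \in [0,x_l)$ and $h(x) = x + i\varepsilon$ for $x \in [x_l, x_d)$ with $\varepsilon \neq 0$, take atoms $X_j = [x_{j-1},x_j)\times\mathbb{R}$ (convex vertical strips), and define each $T_j$ as a horizontal translation by $\tau_j$ composed with the vertical correction that sends the source height of $h(I_j)$ to the target height of $h(f(I_j))$. The sole compatibility requirement, namely that $T_j$ be a single well-defined isometry on all of $X_j$, reduces to the statement that $f(I_j)$ lies entirely in $[0,x_l)$ or entirely in $[x_l, x_d)$; this is automatic because $x_l = y_m$ is itself a slot boundary, so no $f(I_j)$ straddles it. The resulting $h$ is manifestly piecewise continuous, satisfies $h\circ f = T\circ h$, and is discontinuous at $x_l$.

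The main obstacle is the degenerate subcase of the forward direction in which $\pi^{-1}(m+1) = 1$: then $x_l = f(0)$ and the naive $f$-preimage lies at $0 \notin D$, so the slot-endpoint argument does not immediately locate an element of $f^{-1}(D)\cap D$. To close this gap I would invoke the minimality hypothesis on $(I,f)$: applying the same slot-endpoint analysis to an iterate $T^n$ (whose atoms refine the original partition, as in the Rauzy-Veech induction mentioned earlier in Section~\ref{sec:embeddings}) should force either a second discontinuity of $h$ at a non-degenerate cut, or a direct contradiction with the bijectivity of $T$ on the invariant set $h(I)$, and in either case produces the required element of $f^{-1}(D)\cap D$.
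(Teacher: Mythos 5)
Your two constructions track the paper's own proof quite closely: for the ``if'' direction the paper likewise takes a trivial continuous embedding from Lemma~\ref{l1s2} and translates the portion of the picture beyond the chosen cut point, and for the ``only if'' direction it likewise pulls the two one-sided limits of $h$ at the jump back through $T^{-1}$ and concludes that the jump must lie over a discontinuity of $f^{-1}$. One small wording issue first: connectedness of $T_j(h(I_j))$ does not by itself forbid a jump of $h$ inside $f(I_j)$, since a discontinuous map can have connected image; what you actually want is that $h|_{f(I_j)}=T_j\circ h|_{I_j}\circ (f|_{I_j})^{-1}$ is continuous, which gives continuity of $h$ at every interior point of $f(I_j)$. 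That is easily repaired and yields your conclusion that every discontinuity of $h$ is a slot boundary $y_m=f(x_{\pi^{-1}(m+1)-1})$.

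The genuine problem is the degenerate case you flag, $\pi^{-1}(m+1)=1$, and your proposed fix cannot close it. Passing to iterates does not help: the discontinuity set of $h$ does not grow under iteration, and the refined slot boundaries only add points of the form $f^k(x_i)$ with $k\geq 2$, which say nothing about $f^{-1}(D)\cap D$ for $D=\{x_1,\ldots,x_{d-1}\}$. Worse, the case is not contradictory but realizable. Take $d=3$, $\pi(1)=3$, $\pi(2)=2$, $\pi(3)=1$ and $\mu=(\alpha+\beta,\beta,\alpha)$ with $\alpha/(\alpha+\beta)$ irrational. This IET is minimal (its first return map to $I_1$ is an irrational rotation and $f(I_2\cup I_3)=I_1$), and one computes $f(0)=x_1$, $f^{-1}(x_1)=0$, $f^{-1}(x_2)=\beta$, so $f^{-1}(D)\cap D=\emptyset$. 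Yet your own ``if''-direction construction with the jump placed at $x_1=f(0)$ (namely $h(x)=x$ on $[0,x_1)$, $h(x)=x+i$ on $[x_1,x_3)$, vertical-strip atoms, $T_1(z)=z+\tau_1+i$ and $T_j(z)=z+\tau_j-i$ for $j=2,3$) is a perfectly good discontinuous piecewise continuous embedding, precisely because every slot $f(I_j)$ lies entirely on one side of $x_1$. So in the degenerate case the implication you are trying to prove genuinely fails; the dividing line your argument actually establishes is whether some point of $D$ is a slot boundary, i.e.\ whether $f^{-1}(D)\cap\{x_0,x_1,\ldots,x_{d-1}\}\neq\emptyset$. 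You should know that the paper's proof has the same blind spot: it asserts that the $T$-preimages of the one-sided limits $\underline{z_{j'}}$, $\overline{z_{j'}}$ lie in a single atom $Y_{l'}$, and that is exactly what breaks down when $f^{-1}(x_{j'})=x_0$, since one preimage is $h(0)\in Y_1$ while the other is a left limit of $h$ at the right end of a different $I_m$.
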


\begin{proof}
	Let $I=I_1\cup...\cup I_d$, with
	$I_j=[x_{j-1},x_{j}),  j \in \{1,...,d\}.$
	
	We begin by proving that if there is  $j' \in \{1,...,d-1\}$ such that $f_{\mu,\pi}^{-1}(x_{j'})\in \{x_1,...,x_{d-1}\}$, then there exists a $d$-PWI $(X,T)$, such that $(I,f)$ has a discontinuous embedding into $(X,T)$.	
	
	By Lemma~\ref{l1s2} there is a continuous embedding of $(I,f)$ by $h'$ into a $d$-PWI $(X',T')$ with $Y'=h(I) \subset X'$ invariant set for $(X',T')$. Note that since this embedding is trivial we can take $X'$ to be a compact set. 
	
	Let $|I|$, $|X'|$ denote, respectively, the diameter of $I$ and $X'$. Set $Y'_j=Y'\cap X'_j$ for $j=1,...,d$ and let
	\begin{equation*}
	X_j=\left\{\begin{array}{ll}
	\vspace{0.2cm}
	X'_j, & \textrm{if} \  j\leq j',\\
	X'_j+2|X'|, & \textrm{if} \  j > j',\\
	\end{array}
	\right.
	\end{equation*}
	with
	$X=X_1\cup...\cup X_d.$
	Define the maps
	\begin{equation*}
	T_j(z)=\left\{\begin{array}{ll}
	\vspace{0.2cm}
	T'_j(z), &\textrm{if} \  j\leq j',\\
	T'_j(z-2|I|)+2|I|,   & \textrm{if} \  j > j'.\\
	\end{array}
	\right.
	\end{equation*}
	If $T(z)=T_j(z) ,$  for $z \in X_j, $ with  $j =1,...,d,$ then 
	 $(X,T)$ defines a $d$-PWI. 
	
	Define the function $h:I \rightarrow X$ as
	\begin{equation*}
	h(x)=\left\{\begin{array}{ll}
	\vspace{0.2cm}
	h'(x), & x <x_{j'},\\
	h'(x)+2|X'|, & x \geq x_{j'}.\\
	\end{array}
	\right.
	\end{equation*}
	Set $Y=h(I)$. The map $h:I \rightarrow Y$ is bijective and it is simple to check that $(I,f_{\mu,\pi})$  has a piecewise continuous embedding by $h$ into $(X,T)$. Moreover, note that the restriction of $h$ to $I_j$ is continuous for $j=1,...,d$, but $h$ has a discontinuity at $x=x_{j'}$. Thus, the embedding is discontinuous.

	Now assume there is no $x_j \in \{x_1,...,x_{d-1}\}$ such that $f_{\mu,\pi}^{-1}(x_j)\in \{x_1,...,x_{d-1}\}$ and there exists a $d$-PWI $(X,T)$, such that $(I,f_{\mu,\pi})$ has a discontinuous embedding by $h$ into $(X,T)$. 
	
	Since the restriction of $h$ to $I_j$ is continuous for all $j=1,...,d$,  the set of discontinuities of $h$ must be contained in $\{x_1,...,x_{d-1}\}$. Assume $j'\in \{1,...,d-1\}$ is such that $h$ is discontinuous at $x_{j'}$. Let
	$$
	\underline{z_{j'}} = \lim_{x \rightarrow x_{j'}^-}h(x), \quad \overline{z_{j'}} = \lim_{x \rightarrow x_{j'}^+}h(x)
	$$
and  $l\in \{1,...,d\}$ be such that $x_{j'} \in f_{\mu,\pi}(I_l)$. Set $Y=h(I)$ and $Y_j=X_j\cap Y$ for $j=1,...d$.  Then $\{\underline{z_{j'}},\overline{z_{j'}}\} \subset T(Y_l)$. Since $f_{\mu,\pi}^{-1}(x_{j'}) \notin \{x_1,...,x_{d-1}\}$, we have $$T^{-1}(\{\underline{z_{j'}},\overline{z_{j'}}\}) \cap \{h(x_1),...,h(x_{d-1})\}=\emptyset.$$ Thus there must be an $l'\in \{1,...,d\}$ such that $\{\underline{z_{j'}},\overline{z_{j'}}\} \subset Y_{l'}$. Therefore the restriction of $h'$ to $I_{l'}$ must be discontinuous, contradicting $h$ being a piecewise continuous embedding of $(I,f_{\mu,\pi})$ into $(X,T)$. This completes the proof.
\end{proof}

\vspace{0.2in}

We now extend Rauzy-Veech induction (see for instance \cite{AF}) for IETs to PWIs that admit continuous embeddings of IETs. Given a $d$-IET $f_{\mu,\pi}:I\rightarrow I$ such that $I_{d} \neq f(I_{\pi^{-1}(d)})$, we say that $f$ has \emph{type} $0$ if $f(I_{\pi^{-1}(d)}) \subset I_{d}$ and \emph{type} 1 if $I_{d} \subset f(I_{\pi^{-1}(d)})$.
	In both cases the largest interval is called \emph{winner} and the smallest \emph{loser}.
	
	Let $I'$ be interval obtained by removing the loser from $I$, that is
	\begin{equation}
	I'=
	\left\{
	\begin{array}{ll}
	\vspace{0.2cm}
	I \backslash f(I_{\pi^{-1}(d)}), & \textrm{if} \ f \ \textrm{has \ type} \ 0,\\
	I \backslash I_{d}, & \textrm{if} \ f \ \textrm{has \ type} \ 1.
	\end{array}
	\right.
	\end{equation}
	The Rauzy-Veech induction of $f$ is its first return map $\mathcal{R}(f)$ to $I'$, which again is a $d$-IET.
	
	We can extend this induction procedure to PWIs which admit continuous embeddings of IETs as follows. Assume $(I,f)$ has a continuous embedding by $h$ into $(X,T)$. Define the map $\mathcal{S}(T)$ as the first return map under $T$ to $X'$, where
	\begin{equation*}
	X'=\left\{\begin{array}{ll}
	\vspace{0.2cm}
	\bigcup_{j=1}^{d-1} X_j \cup (X_d \cap T(X_{\pi^{-1}(d)})) , & \textrm{if} \ f \ \textrm{has \ type} \ 0,\\
	\bigcup_{j=1}^{d-1} X_j , & \textrm{if} \ f \ \textrm{has \ type} \ 1.
	\end{array}\right.
	\end{equation*}
	$(X',\mathcal{S}(T))$ is again a $d'$-PWI since it is a first return map or a PWI to a convex subset of $X$. However it is now possible that $d'\neq d$. 
	
	We now show that a continuous embedding of $(I,f)$ into $(X,T)$ also embeds $(I',\mathcal{R}(f))$ into $(X',\mathcal{S}(T))$.
	
\begin{theorem}\label{RS}
	Assume that a  $d$-IET $(I,f)$, such that $I_{d} \neq f(I_{\pi^{-1}(d)})$, has a continuous embedding by $h$ into a $d$-PWI $(X,T)$. Then $(I',\mathcal{R}(f))$ has a continuous embedding by $h$ into $(X',\mathcal{S}(T))$.
\end{theorem}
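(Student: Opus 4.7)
The plan is to verify each clause in the definition of a continuous embedding for the restriction $h|_{I'}$ viewed as a map into $X'$. Injectivity, continuity, and the homeomorphism property on each atom are inherited directly from the hypothesis that $h$ is a continuous embedding on $I$; the substantive work is to establish (i) $h(I')\subset X'$ and $h(I')$ is $\mathcal{S}(T)$-invariant, (ii) the intertwining identity $h\circ\mathcal{R}(f) = \mathcal{S}(T)\circ h$ on $I'$, and (iii) that, after a suitable labelling of the atoms of $\mathcal{S}(T)$, each atom of $\mathcal{R}(f)$ is mapped into the correspondingly labelled atom of $\mathcal{S}(T)$.

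The engine of the argument is the iterated conjugacy $T^k\circ h = h\circ f^k$ for all $k\geq 0$, which is immediate by induction from $T\circ h = h\circ f$. For (i), I would verify that $h$ sends the piece of $I$ excised to form $I'$ into the piece of $X$ excised to form $X'$, treating the two Rauzy-Veech types separately: in type $0$ the removed set $f(I_{\pi^{-1}(d)})$ satisfies $h(f(I_{\pi^{-1}(d)})) = T(h(I_{\pi^{-1}(d)}))\subset X_d\cap T(X_{\pi^{-1}(d)})$, while in type $1$ the removed set $I_d$ satisfies $h(I_d)\subset X_d$; in both cases the image lies in the portion of $X$ not contained in $X'$. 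This yields the identity $h(I')=h(I)\cap X'$, and hence $h(I')\subset X'$. Invariance of $h(I')$ under $\mathcal{S}(T)$ then follows because the forward $T$-orbit of any point in $h(I')$ remains inside the $T$-invariant set $h(I)$, and by the definition of $\mathcal{S}(T)$ its first return to $X'$ lies in $h(I)\cap X' = h(I')$.

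For (ii), I would let $n(x) = \min\{n\geq 1 : f^n(x)\in I'\}$ be the Rauzy-Veech return time and combine the iterated conjugacy with the equivalence $f^k(x)\in I'\Leftrightarrow T^k(h(x))\in X'$ (a direct consequence of $h(I')=h(I)\cap X'$) to see that the first return time of $h(x)$ to $X'$ under $T$ is also $n(x)$; hence $\mathcal{S}(T)(h(x)) = T^{n(x)}(h(x)) = h(f^{n(x)}(x)) = h(\mathcal{R}(f)(x))$. For (iii), the atoms of $\mathcal{R}(f)$ are the maximal subintervals of $I'$ on which both the return time $n(\cdot)$ and the symbolic itinerary $(i_0(x),\ldots,i_{n(x)-1}(x))$ are constant, and the atoms of $\mathcal{S}(T)$ admit the same description with $f$ replaced by $T$; since $h$ already sends $I_j$ into $X_j$ and intertwines the dynamics, the image of every $\mathcal{R}(f)$-atom lies in a single $\mathcal{S}(T)$-atom, namely the one carrying the same first-return itinerary, and one relabels accordingly. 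The main technical obstacle is the case analysis in (i), keeping careful track of how the atomic partition of $X$ restricts to $X'$ in each Rauzy-Veech type; once the key identity $h(I')=h(I)\cap X'$ is in place, the remaining parts are essentially bookkeeping consequences of the conjugacy.
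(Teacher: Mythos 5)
Your proposal is correct and rests on the same engine as the paper's proof, namely the iterated conjugacy $T^k\circ h=h\circ f^k$ together with the observation that the piece excised from $I$ is carried by $h$ into the piece excised from $X$. The difference is one of packaging. The paper argues concretely: it writes down $\mathcal{R}(f)$ explicitly as $f^2$ on the single atom $I'_{\pi^{-1}(d)}$ (type $0$) or $I'_{\pi^{-1}(d)+1}$ (type $1$) and as $f$ elsewhere, then verifies by the same case analysis that $\mathcal{S}(T)$ equals $T^2$ on the $h$-image of that atom and $T$ elsewhere, and reads off the intertwining from these two displayed formulas. You instead extract the single identity $h(I')=h(I)\cap X'$ and deduce from it that the first return times of $x$ to $I'$ under $f$ and of $h(x)$ to $X'$ under $T$ coincide, so the return maps are automatically conjugate without ever computing that the return time is $1$ or $2$. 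Your version is marginally more general (it would work verbatim for inducing on any subinterval whose $h$-image is $h(I)\cap X'$, e.g.\ for iterated Rauzy--Veech or Zorich acceleration), and you additionally spell out the relabelling of the atoms of $\mathcal{S}(T)$ by first-return itineraries, a point the paper leaves implicit when it remarks that $(X',\mathcal{S}(T))$ is a $d'$-PWI with possibly $d'\neq d$. One caveat applies equally to both arguments: in type $0$ the inclusion $h(I'_d)\subset X'$ (and hence your identity $h(I')=h(I)\cap X'$) requires reading the paper's definition of $X'$ with $X_d\setminus T(X_{\pi^{-1}(d)})$ in place of $X_d\cap T(X_{\pi^{-1}(d)})$, which is evidently the intended meaning; with that reading your step (i) goes through as you describe.
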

\begin{proof}
	We prove that for all $x \in I'$ we have
	\begin{equation}\label{RS1}
	h \circ \mathcal{R}(f)  (x) = \mathcal{S}(T) \circ h (x).
	\end{equation}
	
	Assume first that $f$ has type $0$. Let $I_j'=I_j$ for $j \neq d$ and $I_d'=I_d \backslash f(I_{\pi^{-1}(d)})$. It is well known (see \cite{Vi06}) that 
	\begin{equation}\label{RS2}
	\mathcal{R}(f)(x)\left\{\begin{array}{ll}
	\vspace{0.2cm}
	f^2(x) , & x \in I_{\pi^{-1}(d)}',\\
	f(x) , & x \in I_{j}', \ j \neq \pi^{-1}(d).
	\end{array}\right.
	\end{equation} 
	We now show that we have
	\begin{equation}\label{RS3}
	\mathcal{S}(T)(z)\left\{\begin{array}{ll}
	\vspace{0.2cm}
	T^2(z) , & z \in h(I_{\pi^{-1}(d)}'),\\
	T(z) , & z \in h(I_{j}'), \ j \neq \pi^{-1}(d).
	\end{array}\right.
	\end{equation}

	Note that $f(I_j')\subset I'$, for $j \neq  \pi^{-1}(d)$. Thus,
	 by \eqref{eq0s2} we have $T(h(I_j'))\subset h(I')$, 
	and we get \eqref{RS3} for $z \in h(I_{j}')$ and $j \neq \pi^{-1}(d)$.

	Since $f(I_{\pi^{-1}(d)}')=f(I_{\pi^{-1}(d)}) \not \subset I'$ and
	$f^2(I_{\pi^{-1}(d)'}) \subset f(I_d) \subset I',$ 
	 by \eqref{eq0s2} we have $T(h(I_{\pi^{-1}(d)}'))=T(h(I_{\pi^{-1}(d)})) \not \subset h(I')$ and
	$T^2(h(I_{\pi^{-1}(d)}')) \subset T(h(I_d)) \subset h(I'),$
	and thus we have \eqref{RS3}.
	
	Noting that $x \in I_j$ if and only if $h(x) \in h(I_j')$, for $j =1,...,d$, and combining \eqref{eq0s2}, \eqref{RS2} and \eqref{RS3} we get \eqref{RS1}.
	
	Assume now that $f$ has type $1$. Let $I_j'=I_j$ for $1 \leq j < \pi^{-1}(d)$, $I_{\pi^{-1}(d)}'=I_{\pi^{-1}(d)}\backslash f^{-1}(I_d)$, $I_{\pi^{-1}(d)+1}'= f^{-1}(I_d)$ and $I_j'=I_{j-1}$ for $\pi^{-1}(d)+1 < j \leq d$. It is clear that
	\begin{equation}\label{RS6}
	\mathcal{R}(f)(x)\left\{\begin{array}{ll}
	\vspace{0.2cm}
	f^2(x) , & x \in I_{\pi^{-1}(d)+1}',\\
	f(x) , & x \in I_{j}', \ j \neq \pi^{-1}(d)+1.
	\end{array}\right.
	\end{equation} 
	By a similar argument it can be proved that
	\begin{equation}\label{RS7}
	\mathcal{S}(T)(z)\left\{\begin{array}{ll}
	\vspace{0.2cm}
	T^2(z) , & z \in h(I_{\pi^{-1}(d)+1}'),\\
	T(z) , & z \in h(I_{j}'), \ j \neq \pi^{-1}(d)+1.
	\end{array}\right.
	\end{equation}
	Since $x \in I_j$ if and only if $h(x) \in h(I_j')$, for $j =1,...,d$,  combining \eqref{eq0s2}, \eqref{RS6} and \eqref{RS7} we get \eqref{RS1}.
\end{proof}


\section{Connecting equations and continuous embeddings of $2,3$-IETs}
\label{sec:connecting}

In this section we  introduce a graph for a given permutation. We  use its combinatorial and topological properties to obtain a necessary condition for the parameters of a PWI to be a continuous embedding of an IET into a PWI described by the same permutation. 

We  then prove  that only trivial embeddings of $2$-IETs are possible and that a $3$-PWI has at most one non-trivially continuous embedded $3$-IET with the same underlying permutation. 

Given an irreducible permutation $\pi$ of $\{1,...,d\}$ and $\mu \in \mathbb{R}_+^d$, let $f_{\mu,\pi}(x):I \rightarrow I$ denote a minimal IET with $I=I_1 \cup ...\cup I_d$. As before we write $f=f_{\mu,\pi}$. Recall \eqref{eq3}.  It is clear that if $f_j(x)=x+\tau_j,$ for $ x \in I,$ and  $j=1,...,d$, then $f(x)=f_j(x),$  for $x \in I_j$.

For  $x_j$ with $0 \leq j \leq d$ as in \eqref{eq0} we have the following
\begin{equation}\label{ce5}
\begin{array}{l}
\vspace{0.3cm}
f_{\pi^{-1}(1)}(x_{\pi^{-1}(1)-1})=x_0, \\
\vspace{0.3cm}
f_{\pi^{-1}(j)}(x_{\pi^{-1}(j)-1})=f_{\pi^{-1}(j-1)}(x_{\pi^{-1}(j-1)}), \quad j=2,...,d,\\
 f_{\pi^{-1}(d)}(x_{\pi^{-1}(d)})=x_d.
\end{array}
\end{equation}

We extend $\pi$ to $\pi(0)=0$ and let $f_0$ be the identity map in $I$. For $j \in \Z$ we write $[j]= j \mod d+1$. We can write \eqref{ce5}  as 
\begin{equation}\label{ce7}
f_{\pi^{-1}([j])}(x_{[\pi^{-1}(j)-1]})
	=f_{\pi^{-1}([j-1])}(x_{\pi^{-1}([j-1])}), 
\end{equation}
where $j=0,...,d.$  

We now define a directed graph $\mathcal{G}_{\pi}$ in $d+1$ vertices $v_0,...,v_d$ such that there is an edge $$v_{\pi^{-1}([i-1])} \rightarrow v_{\pi^{-1}([j-1])}$$ if 
$\pi^{-1}([j-1]) = [\pi^{-1}(i)-1] ,$ with  $i,j \in\{0,...,d\}.$

The next proposition characterizes the topology of $\mathcal{G}_{\pi}$.
\begin{proposition}\label{tce1}
	Given an irreducible permutation $\pi$, the directed graph $\mathcal{G}_{\pi}$ is a disjoint union of directed cyclic subgraphs.
\end{proposition}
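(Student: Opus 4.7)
The plan is to realize $\mathcal{G}_\pi$ as the functional graph of a permutation of $\{0,\ldots,d\}$, since the functional graph of a permutation of a finite set decomposes canonically into vertex-disjoint directed cycles.

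First I will reparameterize the edges by their source vertex. Given $a\in\{0,\ldots,d\}$, the equation $a=\pi^{-1}([i-1])$ has the unique solution $i=[\pi(a)+1]$, because both $\pi$ (extended by $\pi(0)=0$) and the shift $i\mapsto[i-1]$ are bijections of $\{0,\ldots,d\}$. Substituting this $i$ into the edge condition $\pi^{-1}([j-1])=[\pi^{-1}(i)-1]$ forces the target label to equal
\begin{equation*}
\sigma(a):=\bigl[\pi^{-1}([\pi(a)+1])-1\bigr],
\end{equation*}
so each vertex $v_a$ has exactly one outgoing edge, namely $v_a\to v_{\sigma(a)}$, and the total number of edges is $d+1$.

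Next I will argue that $\sigma:\{0,\ldots,d\}\to\{0,\ldots,d\}$ is a bijection. Once $\pi$ is extended by $\pi(0)=0$ it is a permutation of $\{0,\ldots,d\}$, so both $\pi$ and $\pi^{-1}$ are bijections on this set; the shifts $k\mapsto[k+1]$ and $k\mapsto[k-1]$ are also bijections of $\{0,\ldots,d\}$ by definition of the bracket notation. Hence $\sigma$, being a composition of these four bijections, is itself a permutation.

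The final step will combine the two observations: $\mathcal{G}_\pi$ has vertex set $\{v_0,\ldots,v_d\}$, every vertex has out-degree one by construction of $\sigma$, and because $\sigma^{-1}$ exists every vertex also has in-degree one. A finite directed graph in which each vertex has in-degree and out-degree equal to one is a vertex-disjoint union of simple directed cycles, which gives the proposition. I do not expect a substantive obstacle; the only delicate point is the bookkeeping between the indices $i,j$ in the edge definition and the induced vertex labels, which the extension $\pi(0)=0$ renders routine.
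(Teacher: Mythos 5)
Your proof is correct and follows essentially the same route as the paper: both arguments show that the ``successor'' map sending a vertex to the head of its unique outgoing edge is a permutation of $\{0,\ldots,d\}$ (being a composition of the bijections $\pi$, $\pi^{-1}$ and the cyclic shifts), and conclude that the graph decomposes into disjoint directed cycles. The only cosmetic difference is that you work directly with the vertex-successor permutation $\sigma$ while the paper tracks the conjugate map $\eta$ on the edge indices $i_k$; your bookkeeping of source and target labels is in fact a little cleaner.
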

\begin{proof}
	Since $\mathcal{G}_{\pi}$ is a finite graph, it has a finite number of connected components, hence it suffices to prove that every connected component of $\mathcal{G}_{\pi}$ is a cyclic graph. 
	
	Consider a vertex $v_{q}$, with $q \in \{0,...,d\}$. There is a unique $i_0=[\pi(q)+1] \in \{0,...,d\}$, such that $\pi^{-1}([i_0-1])=q$.
	Define the map $\eta:\{0,...,d\}\rightarrow \{0,...,d\}$ as $\eta(n)=\pi ( [\pi^{-1}([n-1])+1] ).$ Note that $\eta$ is a bijection, hence $i_1=\eta(i_0)$  is the unique $i_1 \in \{0,...,d\}$ satisfying
	$$\pi^{-1}([i_0-1]) = [\pi^{-1}(i_1)-1].$$
Thus,  there is an edge $v_{q}\rightarrow v_{\pi^{-1}([i_1-1])}$.
	
	We now form a sequence  $\{i_k\}_{k\in \N}$ where $i_0=[\pi(q_1)+1]$ and $i_k=\eta(i_{k-1}),$ for $k \geq 1.$ 
	Since $\eta$ is a bijection between finite sets $\{i_k\}_{k\in \N}$, it must be a periodic sequence. If $\eta$ has period $d+1$, then $\mathcal{G}_{\pi}$ is a cyclic graph.
		Otherwise, $\eta$ has period $p \leq d$. This implies that the vertices $v_n$, for $n \in \{i_k\}_{0,...,p-1}$ and the edges connecting them form a connected and directed cyclic subgraph. Since the point  $q \in \{0,...,d\}$ was chosen without loss of generality, this shows that connected subgraphs of $\mathcal{G}_{\pi}$ are cycles. This completes the proof.
\end{proof}

\begin{proposition}\label{tce2}
	Let $\pi$ be an irreducible permutation of $\{1,...,d\}$, $\mu \in \mathbb{R}_+^d$ and $f_{\mu,\pi}(x):I \rightarrow I$ be a minimal IET. The directed graph $\mathcal{G}_{\pi}$ has an edge  $v_p\rightarrow v_q$ if and only if
	\begin{equation}\label{ce12}
	x_p=f_p^{-1}\circ f_{\pi^{-1}([\pi(p)+1])}(x_q).
	\end{equation}
\end{proposition}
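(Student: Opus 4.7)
The plan is to unwind the graph-theoretic condition defining edges into the indexing identity \eqref{ce7} and then read off the equivalence as a one-line rewriting.

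First I would translate the edge criterion into a usable form. By definition the edge $v_p\to v_q$ exists iff there are indices $i,j\in\{0,\ldots,d\}$ with $p=\pi^{-1}([i-1])$, $q=\pi^{-1}([j-1])$ and $\pi^{-1}([j-1])=[\pi^{-1}(i)-1]$. From $p=\pi^{-1}([i-1])$ we get $i=[\pi(p)+1]$ (using $\pi(p)\in\{0,\ldots,d\}$ so $[\pi(p)]=\pi(p)$), so the edge existing is equivalent to the single condition
\begin{equation*}
\bigl[\pi^{-1}([\pi(p)+1])-1\bigr]=q.
\end{equation*}
Setting $m:=[\pi(p)+1]$, this is just $[\pi^{-1}(m)-1]=q$.

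Next I would apply the fundamental connecting identity \eqref{ce7} with $j=m$. Since $[m-1]=[\pi(p)]=\pi(p)$, the right-hand side of \eqref{ce7} collapses: $\pi^{-1}([m-1])=\pi^{-1}(\pi(p))=p$, and $x_{\pi^{-1}([m-1])}=x_p$. Thus \eqref{ce7} reduces to
\begin{equation*}
f_{\pi^{-1}(m)}\bigl(x_{[\pi^{-1}(m)-1]}\bigr)=f_p(x_p).
\end{equation*}
This is the bridge between the two sides of the desired equivalence.

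For the forward direction, assume the edge $v_p\to v_q$ exists, so $[\pi^{-1}(m)-1]=q$. Substituting into the reduced connecting identity gives $f_{\pi^{-1}(m)}(x_q)=f_p(x_p)$, and since $f_p$ is a translation (hence invertible) we conclude $x_p=f_p^{-1}\circ f_{\pi^{-1}([\pi(p)+1])}(x_q)$, which is \eqref{ce12}. For the converse, assume \eqref{ce12}. Applying $f_p$ yields $f_{\pi^{-1}(m)}(x_q)=f_p(x_p)$, and comparing with the reduced \eqref{ce7} gives $f_{\pi^{-1}(m)}(x_q)=f_{\pi^{-1}(m)}(x_{[\pi^{-1}(m)-1]})$. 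Injectivity of the translation $f_{\pi^{-1}(m)}$ forces $x_q=x_{[\pi^{-1}(m)-1]}$; because all $\mu_k>0$, the map $k\mapsto x_k$ is strictly increasing on $\{0,\ldots,d\}$, hence injective, so $q=[\pi^{-1}(m)-1]$, i.e.\ the edge $v_p\to v_q$ is present in $\mathcal{G}_\pi$.

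The only subtle step is the indexing bookkeeping with $[\,\cdot\,]$ when translating the edge condition into the hypothesis on $i$, so the main obstacle is making sure the reduction $[m-1]=\pi(p)$ and $\pi^{-1}(\pi(p))=p$ is valid in all cases (including $p=0$); this is handled uniformly by the extension $\pi(0)=0$ together with $\pi(p)\in\{0,\ldots,d\}$, so no case analysis is needed.
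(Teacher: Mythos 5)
Your proof is correct and follows essentially the same route as the paper: both reduce the edge condition $v_p\rightarrow v_q$ to the instance of \eqref{ce7} indexed by $[\pi(p)+1]$ and rearrange using invertibility of $f_p$ to obtain \eqref{ce12}. You are in fact slightly more explicit than the paper on the converse direction, where you spell out the injectivity of the translation $f_{\pi^{-1}([\pi(p)+1])}$ and of $k\mapsto x_k$ that the paper's ``if and only if'' leaves implicit.
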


\begin{proof}
	Let $p=\pi^{-1}([i-1])$ and $q=\pi^{-1}([j-1])$, for some $i, j \in \{0,...,d\}$. From \eqref{ce7} we have
	$f_{\pi^{-1}([i])}(x_{[\pi^{-1}(i)-1]})=f_{\pi^{-1}([i-1])}(x_{\pi^{-1}([i-1])}),$
which is equivalent to 
 	\begin{equation}\label{ce14}
 f_{\pi^{-1}([i])}(x_{\pi^{-1}([j-1])})=f_{\pi^{-1}([i-1])}(x_{\pi^{-1}([i-1])}),
 \end{equation}
 if and only if
 $\pi^{-1}([j-1])=[\pi^{-1}(i)-1],$
 that is, if  $v_p \rightarrow v_q$.
 From \eqref{ce14} we get \eqref{ce12}, which completes the proof.
\end{proof}

Now assume $(I,f)$ has a continuous embedding by $h$ into a $d$-PWI $(X,T)$ with $Y=h(I)$ and $Y_j=X_j\cap X$, such that
$T(z)= T_j(z),$ for $z \in Y_j,  j=1,..,d.$ with
\begin{equation}\label{ce16}
T_j(z)= e^{i\theta_j}z+\lambda_j, \quad z \in \C, \quad j=1,..,d.
\end{equation}
We call \eqref{ce16} the \emph{connecting equations}. Define $T_0$ as the identity map in $\C$. Let $z_j=h(x_j),$ for $j = 0,...,d$.
Equations \eqref{ce7} are preserved under topological conjugacy and can be written for $T$ as
\begin{equation}\label{ce17}
T_{\pi^{-1}([j])}(z_{[\pi^{-1}(j)-1]})=T_{\pi^{-1}([j-1])}(z_{\pi^{-1}([j-1])}), \quad j=0,...,d.
\end{equation}
The next corollary follows from Proposition \ref{tce2} and from the topological conjugacy of $(Y,T)$ and $(I,f)$.
\begin{corollary}\label{tce3}
	Assume a minimal $(I,f)$  has a continuous embedding by $h$ into a $d$-PWI $(X,T)$. The directed graph $\mathcal{G}_{\pi}$ has an edge  $v_p\rightarrow v_q$ if and only if
	\begin{equation*}\label{ce18}
	z_p=T_p^{-1}\circ T_{\pi^{-1}([\pi(p)+1])}(z_q).
	\end{equation*}
\end{corollary}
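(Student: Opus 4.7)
The plan is to deduce the corollary directly from Proposition \ref{tce2} by transferring the IET identity \eqref{ce12} through the continuous embedding $h$. The key input is already displayed in \eqref{ce17}: the topological conjugacy $h \circ f_j = T_j \circ h$ on each $I_j$, together with the continuity of $h$ on the closure of each subinterval, forces the boundary identities \eqref{ce5} for the IET to have exact analogues \eqref{ce17} for the PWI, with $x_j$ replaced by $z_j = h(x_j)$ and $f_j$ replaced by $T_j$.

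First I would invoke Proposition \ref{tce2}, which states that $v_p \to v_q$ is an edge of $\mathcal{G}_\pi$ if and only if $x_p = f_p^{-1} \circ f_{\pi^{-1}([\pi(p)+1])}(x_q)$. Next, I would observe that the derivation of \eqref{ce12} from \eqref{ce7} inside the proof of Proposition \ref{tce2} is purely combinatorial: it only uses that $v_p \to v_q$ is equivalent to $\pi^{-1}([\pi(p)+1]) = \eta\bigl([\pi(p)+1]\bigr)$ combined with the boundary-matching relation \eqref{ce7}. Since that combinatorial condition depends only on $\pi$, and since \eqref{ce17} is the exact analogue of \eqref{ce7} with $T_j$ in place of $f_j$ and $z_j$ in place of $x_j$, I can replay the derivation verbatim on the PWI side to obtain $z_p = T_p^{-1} \circ T_{\pi^{-1}([\pi(p)+1])}(z_q)$ whenever $v_p \to v_q$, and conversely.

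As a sanity check I would also give the direct conjugacy argument: applying $h$ to $f_p(x_p) = f_{\pi^{-1}([\pi(p)+1])}(x_q)$ and using that $f_j$, $T_j$ extend to globally defined translations of $\mathbb{R}$ and isometries of $\mathbb{C}$ respectively, continuity of $h$ at the endpoints $x_p, x_q$ promotes the pointwise conjugacy on $I_j$ to the boundary and yields $T_p(z_p) = T_{\pi^{-1}([\pi(p)+1])}(z_q)$, which rearranges to the claimed identity. For the converse direction one runs the same computation backwards and uses injectivity of $h$ to conclude that the corresponding edge must be present in $\mathcal{G}_\pi$.

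The only delicate point, and the one that might be called the main obstacle, is the treatment of boundary points: the $x_j$ are endpoints of the half-open intervals $I_j$, so \eqref{eq0s2} does not literally apply at them. This is handled by continuity of $h$ on $I$ (using one-sided limits at $x_d$ if necessary) combined with the fact that each $T_j$ is an everywhere-defined isometry, so the identity $h \circ f_j = T_j \circ h$ extends from $I_j$ to its closure by taking limits. Once this extension is in place, the argument becomes formal and the corollary follows.
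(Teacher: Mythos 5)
Your proposal is correct and follows essentially the same route as the paper: the authors also derive the corollary by combining Proposition \ref{tce2} with the fact that the boundary identities \eqref{ce7} are preserved under the conjugacy $h$, yielding \eqref{ce17} with $z_j = h(x_j)$ and $T_j$ in place of $x_j$ and $f_j$. Your additional care about extending the conjugacy to the endpoints of the half-open intervals is a reasonable elaboration of a point the paper leaves implicit.
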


Let $p_0 \in \{0,...,d\}$. We define a \emph{connecting sequence} $\{p_k\}_{k \in \N}$ for $p_0$, with $p_k=q_{k-1}$, where $q_{k-1}$ is such that $v_{p_{k-1}}\rightarrow v_{q_{k-1}}$.
By Proposition \ref{tce1}, the connected component of $\mathcal{G}_{\pi}$ containing $v_{p_0}$ must be a directed cyclic graph. Thus,  $\{p_k\}_{k \in \N}$ is a well defined  periodic sequence with period $s(p_0) \leq d+1$. 

Furthermore define the \emph{connecting map} for $p_0$ as
	\begin{equation*}\label{ce19}
F_{p_0}(z)=T_{p_0}^{-1}\circ T_{\pi^{-1}([\pi(p_0)+1])}\circ...\circ T_{p_{s(p_0)-1}}^{-1}\circ T_{\pi^{-1}([\pi(p_{s(p_0)-1})+1])}(z), \quad z \in \C.
\end{equation*}

It follows from Corollary \ref{tce3}  that $z_{p_0}$ is a fixed point of $F_{p_0}$, thus, $F_{p_0}(z_{p_0})=z_{p_0}.$
We have
	\begin{equation}\label{ce20}
\left(e^{i\Theta_{\pi}(p_0)}-1\right)z_{p_0}+F_{p_0}(0)=0, 
\end{equation}
and
$$\Theta_{\pi}(p_0)=\sum_{k=0}^{s(p_0)-1}\theta_{\pi^{-1}([\pi(p_k)+1])}-\theta_{p_k}.$$

Now \eqref{ce20} either imposes a restriction on $h$, if $\Theta_{\pi}(p_0) \neq 0$, by forcing
\begin{equation}\label{ce21}
h(x_{p_0})=\left(1-e^{i\Theta_{\pi}(p_0) }\right)^{-1}F_{p_0}(0),
\end{equation}
or if $\Theta_{\pi}(p_0) = 0$ it imposes a restriction on the parameters $\lambda_j$, $\theta_j$, $j=1,...,d$, by
\begin{equation}\label{ce21a}
F_{p_0}(0)=0.
\end{equation}
Note that $F_{p_0}(0)$ can be seen as a sum where each term is $\lambda_j$ times a coefficient depending only on $\theta_1,...,\theta_d$. 

Denote the coefficient of $\lambda_j$ in $F_{p_0}(0)$  by $r_j(\theta_1,...,\theta_d)$ for $j=1,...,d$. Note that by linearity in $\lambda_j$, \eqref{ce21a} can be written as
\begin{equation}\label{ce22}
\sum_{j=1}^{d}\lambda_j r_j(\theta_1,...,\theta_d)=0.
\end{equation}
 We call  \eqref{ce22} the \emph{parametric connecting equation} for $p_0$.

\begin{theorem}\label{thm:necessary}
Assume a minimal $(I,f)$  has a continuous embedding by $h$ into a $d$-PWI $(X,T)$. If $\mathcal{G}_{\pi}$ is a connected graph, then the parameters $\lambda_j$, $\theta_j$, $j=1,...,d$ satisfy the parametric connecting equation \eqref{ce22}.
\end{theorem}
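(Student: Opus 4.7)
The plan is to exploit the machinery of the connecting graph and connecting map that has already been built up, and observe that connectedness of $\mathcal{G}_\pi$ forces the rotational part of the connecting map to be trivial. By Proposition \ref{tce1}, a connected $\mathcal{G}_\pi$ is a single directed cyclic subgraph on its $d+1$ vertices, so for any choice of $p_0 \in \{0,\ldots,d\}$ the connecting sequence $\{p_k\}_{k\in\N}$ has period exactly $s(p_0)=d+1$ and visits every vertex of $\mathcal{G}_\pi$ exactly once per period.

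Next, I would expand the connecting map. Since each $T_j(z) = e^{i\theta_j}z + \lambda_j$ is an affine isometry of $\C$, the composition $F_{p_0}$ is again affine:
\begin{equation*}
F_{p_0}(z) = e^{i\Theta_\pi(p_0)} z + F_{p_0}(0),
\end{equation*}
with $\Theta_\pi(p_0)=\sum_{k=0}^{s(p_0)-1}\bigl(\theta_{\pi^{-1}([\pi(p_k)+1])}-\theta_{p_k}\bigr)$ as defined just before \eqref{ce20}, and with $F_{p_0}(0)$ depending linearly on $\lambda_1,\ldots,\lambda_d$. Applying the definition of the graph, the successor of $v_{p_k}$ in $\mathcal{G}_\pi$ is $p_{k+1} = [\pi^{-1}([\pi(p_k)+1])-1]$, so $\pi^{-1}([\pi(p_k)+1]) = [p_{k+1}+1]$. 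Hence the two sums $\sum_k \theta_{\pi^{-1}([\pi(p_k)+1])}$ and $\sum_k \theta_{p_k}$ run over the same index set (each equals $\sum_{j=0}^d \theta_j$, recalling $\theta_0=0$ since $T_0$ is the identity), because the single cycle visits every vertex once and the map $k\mapsto [p_{k+1}+1]$ is also a bijection of $\{0,\ldots,d\}$. Therefore $\Theta_\pi(p_0)=0$.

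Finally, I would invoke \eqref{ce20}: since $z_{p_0}=h(x_{p_0})$ is a fixed point of $F_{p_0}$ by Corollary \ref{tce3}, and the rotational factor $e^{i\Theta_\pi(p_0)}-1$ vanishes, the equation reduces to $F_{p_0}(0)=0$. Writing $F_{p_0}(0)$ as the $\lambda$-linear combination $\sum_{j=1}^d \lambda_j r_j(\theta_1,\ldots,\theta_d)$ introduced just above \eqref{ce22}, this is precisely the parametric connecting equation, completing the proof.

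The argument is largely bookkeeping once the key observation is made, so the only real obstacle is verifying that the cycle in $\mathcal{G}_\pi$ of length $d+1$ forces the telescoping of angles to zero; this reduces to carefully tracking the indexing convention $p_{k+1}=[\pi^{-1}([\pi(p_k)+1])-1]$ against the index appearing in the definition of $\Theta_\pi(p_0)$, which is where a notational slip would be most likely.
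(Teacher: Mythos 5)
Your proposal is correct and follows essentially the same route as the paper: use Proposition \ref{tce1} to get a single $(d+1)$-cycle, observe that the bijectivity of $n\mapsto\pi^{-1}([\pi(n)+1])$ (equivalently, your successor identity $\pi^{-1}([\pi(p_k)+1])=[p_{k+1}+1]$) makes the two angle sums in $\Theta_\pi(p_0)$ coincide so that $\Theta_\pi(p_0)=0$, and then read off $F_{p_0}(0)=0$, i.e.\ \eqref{ce22}, from \eqref{ce20}. Your bookkeeping of the indexing is in fact slightly more explicit than the paper's one-line justification of $\Theta_\pi(0)=0$.
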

\begin{proof}
	Since $\mathcal{G}_{\pi}$ is a connected graph,  by Proposition \ref{tce1} it must be a directed cyclic graph. The connecting sequence for $p_0=0$ is well defined and has period $d+1$. Since the map $n \rightarrow \pi^{-1}([\pi(n)+1])$ is a bijection between finite sets we must have
	\begin{equation*}\label{ce23}
	\Theta_{\pi}(0)=\sum_{k=0}^{d}\theta_{\pi^{-1}([\pi(p_k)+1])}-\sum_{k=0}^{d}\theta_{p_k}=0.
	\end{equation*}
	Thus, there are functions $r_j(\theta_1,...,\theta_d)$ for $j=1,...,d$, not identically $0$, satisfying \eqref{ce22}.
\end{proof}

In the next theorem we prove that there are no non-trivial continuous embeddings of minimal 2-interval exchange transformations into orientation preserving planar PWIs.
\begin{theorem}\label{thm:trivial2IET}
	A minimal 2-IET  has no non-trivial continuous embedding into a $2$-PWI.
\end{theorem}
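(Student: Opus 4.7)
The plan is to apply Theorem \ref{thm:necessary} and then perform a case split on the pair of rotation angles $(\theta_1,\theta_2)$. The only irreducible permutation on $\{1,2\}$ is the swap $\pi(1)=2,\pi(2)=1$, and a direct check shows $\mathcal{G}_\pi$ is the 3-cycle $v_0\to v_1\to v_2\to v_0$. Evaluating the connecting map $F_0=T_2\circ T_1^{-1}\circ T_2^{-1}\circ T_1$ at the origin and imposing $F_0(0)=0$ yields the parametric connecting equation
\[
\lambda_1\bigl(1-e^{i\theta_2}\bigr)=\lambda_2\bigl(1-e^{i\theta_1}\bigr).
\]
With $I_1=[0,\mu_1)$ and $I_2=[\mu_1,\mu_1+\mu_2)$, the IET acts as $f(x)=x+\mu_2$ on $I_1$ and $f(x)=x-\mu_1$ on $I_2$.

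Next I would rule out the mixed regime: if exactly one of $\theta_1,\theta_2$ vanishes, the connecting equation forces the corresponding $\lambda_j$ to vanish, so the associated $T_j$ is the identity. Then the conjugation equation $T_j\circ h=h\circ f$ forces $h(f(x))=h(x)$ on $I_j$, contradicting injectivity of $h$ since $f$ has no fixed points.

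The two remaining cases are handled by a common coboundary argument. If $\theta_1=\theta_2=0$, both $T_j$ are pure translations, and I would set $\psi(x)=h(x)-mx$ with $m=(\lambda_1-\lambda_2)/(\mu_1+\mu_2)$, chosen so that $\psi\circ f-\psi\equiv d:=(\lambda_1\mu_1+\lambda_2\mu_2)/(\mu_1+\mu_2)$ on all of $I$. Iterating gives $\psi(f^n(x))=\psi(x)+nd$; by minimality there is an unbounded subsequence with $f^{n_k}(x)\to x_0\in I$, and continuity of $\psi$ at $x_0$ keeps $\psi(f^{n_k}(x))$ bounded, forcing $d=0$. Then $\psi$ is $f$-invariant and continuous, so constant by minimality of $f$, and $h(x)=mx+\mathrm{const}$ is a linear embedding. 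If instead $\theta_1,\theta_2\ne 0$, the connecting equation rearranges to $\lambda_1/(1-e^{i\theta_1})=\lambda_2/(1-e^{i\theta_2})$, so $T_1$ and $T_2$ share a common fixed point $\zeta$ and act as rotations about it. The function $\rho(x)=|h(x)-\zeta|$ is then $f$-invariant, continuous, and so constant by minimality, placing $h(I)$ on a circle $\{|z-\zeta|=r\}$. Writing $h(x)=\zeta+re^{i\varphi(x)}$ for a continuous lift $\varphi$, the identity $T_j\circ h=h\circ f$ makes $\varphi\circ f-\varphi-\theta_j$ a continuous $2\pi\Z$-valued function on $I_j$, hence a constant $2\pi n_j$; setting $\tilde\theta_j=\theta_j+2\pi n_j$, the coboundary argument above applies verbatim to $\varphi$ in place of $h$ and $\tilde\theta_j$ in place of $\lambda_j$, yielding $\varphi(x)=cx+\mathrm{const}$ and so an arc embedding.

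The main obstacle I anticipate is the coboundary step, where one might worry that $\psi$ (or $\varphi$) could be unbounded on the half-open interval $I=[0,\mu_1+\mu_2)$, preventing the conclusion that $d=0$. The resolution requires no boundedness hypothesis on $X$ or $h$: minimality provides an unbounded sequence $n_k$ along which $f^{n_k}(x)\to x_0\in I$ (choose $x_0$ outside the countable forward orbit of $x$), and continuity of $\psi$ at $x_0$ bounds $\{\psi(f^{n_k}(x))\}$, killing the drift.
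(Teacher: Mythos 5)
Your proposal is correct, and it opens exactly as the paper does: you invoke Theorem \ref{thm:necessary} for the swap permutation, compute the connecting map $F_0$, and arrive at the same parametric connecting equation $\lambda_1(1-e^{i\theta_2})=\lambda_2(1-e^{i\theta_1})$ (the paper's \eqref{nt7a}). Where you genuinely diverge is in how the cases are closed out. The paper splits on whether both sides of \eqref{nt7a} vanish and then \emph{asserts} the structural facts it needs: that for two pure translations with dense orbits one must have $\lambda_1=s\lambda_2$ with $s\in\R$ and ``invariant sets must be unions of lines,'' and analogously for rotations and circle arcs; it also leans on compactness of $Y=h(I)$, which is not automatic since $I$ is half-open. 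You instead split on the vanishing of $\theta_1,\theta_2$ (disposing of the mixed case by injectivity of $h$, which is cleaner than the paper's case (iii)) and then prove the structural facts via an explicit coboundary argument: subtracting the affine solution $mx$ turns the conjugacy into $\psi\circ f=\psi+d$, minimality plus continuity of $\psi$ at an accumulation point off the orbit kills the drift $d$, and $f$-invariance plus minimality forces $\psi$ constant, so $h$ is exactly affine (resp., after the radius and winding-number reductions, exactly an arc parametrization with a single angular speed). This buys you two things the paper's proof does not deliver cleanly: no compactness assumption on $h(I)$ is needed, and you obtain the precise form of $h$ (hence that $|v_j|$, $r_j$, $a_j$ are common across atoms) rather than merely that the image lies in a union of lines or arcs. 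The only points worth making explicit in a write-up are that $r>0$ and $m\neq 0$ follow from injectivity of $h$, and that the lift $\varphi$ exists because $I$ is an interval and $h-\zeta$ is nonvanishing; neither is a gap.
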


\begin{proof}
	Let $(I,f)$ be a minimal $2$-IET different from the identity with $\mu=\{\mu_1,\mu_2\} \in \R_+^2$. Assume there is a continuous embedding of $(I,f)$ by $h$ into a 2-PWI $(X,T)$ with partition $\{X_1,X_2\}$. 
	
	Set $Y=h(I)$ and $Y_j=Y\cap X_j$ for $j=1,2$.
	There are $\theta_j \in [0,2\pi)$ and $\lambda_j \in \C$, such that
	\begin{equation*}
	T_j(z)=e^{i \theta_j}z+ \lambda_j, \ z \in \C, \quad j=1,2,
	\end{equation*}
	and the restriction of $T$ to $Y$ is given by
	$$T(z)=T_j(z), \ z \in Y_j, \quad j=1,2.$$
	
	Since $f$ is not the identity, $\pi=(12)$ and  $\mathcal{G}_{\pi}$ is a connected graph, the connecting sequence for $p_0=0$ is $p=(0,1,2,...)$. This gives the connecting map 
	\begin{equation*}\label{ce23a}
	F_0(z)=T_0^{-1}\circ T_2 \circ T_1^{-1}\circ T_0 \circ T_2^{-1}\circ T_1 (z).
	\end{equation*}
	By Theorem \ref{thm:necessary}, the parameters $\lambda_1$, $\lambda_2$, $\theta_1$, and $\theta_2$ must satisfy the parametric connecting equation, which can be written as
	\begin{equation}\label{ce23b}
	\lambda_1(e^{-\theta_1}-e^{\theta_2-\theta_1})+\lambda_2(1-e^{-\theta_1})=0.
	\end{equation}
	Multiplying by $e^{i\theta_1}$, \eqref{ce23b} becomes
	\begin{equation}\label{nt7a}
	\lambda_2(1-e^{i \theta_1})=\lambda_1(1-e^{i \theta_2}).
	\end{equation}
	
	Since $T_j$ is not the identity map  \eqref{nt7a} is true if either both sides equal $0$ or not. 
	
	In the case that both sides are equal to zero, we have the following cases:
	
	i) If $\theta_1=\theta_2=0 \mod 2\pi$, then $T_j(z)=z+\lambda_j, \ z \in Y_j.$ Since we are assuming that $f$ is minimal and $Y$ is compact it follows that $T$ has dense orbits. This implies that there is  $s \in \R$ such that $\lambda_1=s\lambda_2$.
	For such a transformation, invariant sets must be unions of lines. This implies that $h$ is a trivial linear embedding.
	
	ii) If $\lambda_1=\lambda_2=0$, then $T_j(z)=e^{i\theta_j}z, \ z \in Y_j.$
	Since we are assuming that $f$ is minimal, the orbits of $T$ must be dense and in such a transformation, invariant sets must be unions of circle arcs. This implies that $h$ is a trivial circle arc embedding.
	
	iii) Finally, if $\lambda_j=0$ and $\theta_j=0 \mod 2\pi$, for $j=1$ or $2$ then $T_j$ is equal to the identity and hence $T$ can not be conjugated to a minimal IET.
	
	In the case that both sides of equation \eqref{nt7a} are different than $0$, there must exist $\lambda \in \C$ such that $\lambda_j = \lambda (1-e^{i\theta_j}), \ j=1,2.$
	This implies
	$$
	T_j(z)=(z-\lambda)e^{i\theta_j}+\lambda
	$$
	which is conjugate by  $L(z)=z+\lambda$, to the map
	$$
	\tilde{T}(z)=e^{i \theta_j}z, \ z \in Y_j-\lambda, \ j=1,2.
	$$
	and thus $h$ is a circle arc embedding. This completes the proof.
\end{proof}

In Section~\ref{sec:example} we present some numerical results which suggest that there exist non-trivial embeddings of $d$-IETs into $d$-PWIs, for $d=3$ and $d=4$.
The following example shows a permutation that yields a parametric connecting equation that can in principle allow the existence of non-trivial embeddings.

\begin{example}
	Consider the permutation $\pi=(2)(143)$. It is clear that  $\mathcal{G}_{\pi}$ is a connected graph. The connecting sequence for $0$ is $p=(0,2,3,1,4,...)$ and we have the connecting map 
\begin{equation*}\label{ce24}
F_0(z)=T_0^{-1}\circ T_{3} \circ T_{2}^{-1}\circ T_4 \circ T_3^{-1}\circ T_2 \circ T_1^{-1}\circ T_0 \circ T_4^{-1}\circ T_1 (z).
\end{equation*}
From this we get the following parametric connecting equation
\begin{equation}\label{ce25}
\lambda_1(e^{-i\theta_1}-e^{i(\theta_4-\theta_1)})+\lambda_2(e^{i(\theta_4-\theta_2)}-e^{i(\theta_3-\theta_2)})+\lambda_3(1-e^{i(\theta_4-\theta_2)})+\lambda_4(e^{i(\theta_3-\theta_2)}-e^{-i\theta_1})=0.
\end{equation}
\end{example}
In Section 5 we will discuss an example of a PWI satisfying \eqref{ce25}.	
Before proving the next theorem, recall that we are representing a permutation $\pi \in S(3)$ using cyclic notation.

\begin{theorem}\label{thm:3PWI}
	A $3$-PWI has at most one non-trivially continuously embedded minimal $3$-IET with the same underlying permutation.
\end{theorem}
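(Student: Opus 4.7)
The plan is to proceed by case analysis on the three irreducible permutations of $\{1,2,3\}$, namely $\pi\in\{(13),(123),(132)\}$. For each $\pi$, applying the edge rule of Proposition~\ref{tce1} gives the cycle decomposition of $\mathcal{G}_\pi$: $\mathcal{G}_{(13)}$ is a disjoint union of two $2$-cycles $v_0\leftrightarrow v_2$ and $v_1\leftrightarrow v_3$, while $\mathcal{G}_{(123)}$ and $\mathcal{G}_{(132)}$ each decompose into a $3$-cycle together with a single self-loop (at $v_1$ and at $v_2$ respectively). A direct computation of the total rotation around each cycle shows that the two cycles of $\mathcal{G}_\pi$ carry opposite angle sums: $\Theta_\pi=\pm(\theta_1+\theta_3-\theta_2)$ for $(13)$, $\pm(\theta_1-\theta_2)$ for $(123)$, and $\pm(\theta_2-\theta_3)$ for $(132)$.

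Suppose $h$ is a non-trivial continuous embedding. First I would rule out the \emph{degenerate regime}, in which some (equivalently both) cycles carry zero angle sum. Combining this vanishing with the parametric connecting equation~\eqref{ce22} yields algebraic identities forcing $T_1,T_2,T_3$ to commute pairwise: for $\pi=(13)$ the two $2$-cycles yield $T_2=T_1T_3=T_3T_1$; for $\pi=(123)$ the self-loop gives $T_1=T_2$ and the $3$-cycle then forces $T_3$ to commute with $T_1$; analogously for $(132)$. Since a pair of commuting orientation-preserving planar isometries either share a translation direction or share a rotation centre, $T$ must preserve a foliation of $\C$ by parallel lines or by concentric circles; the continuous curve $h(I)$ is then confined to a single leaf, making $h$ a linear embedding of the form~\eqref{eq1s2} or an arc embedding of the form~\eqref{eq2s2}, contradicting non-triviality. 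Hence $h$ lies in the \emph{generic regime} where the angle sums are nonzero modulo $2\pi$. In that regime each connecting map $F_{p_0}$ is an isometry with nontrivial rotation and therefore has a unique fixed point, so Corollary~\ref{tce3} together with~\eqref{ce21} pins down all four break-points $z_0,z_1,z_2,z_3$ from $(X,T)$ and $\pi$ alone.

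Finally I would upgrade uniqueness of the break-points to uniqueness of the pair $(\mu,h)$. Given two non-trivial continuous embeddings $h^{(k)}\colon I^{(k)}\to X$ of minimal IETs $(I^{(k)},f_{\mu^{(k)},\pi})$, $k=1,2$, the previous step gives a common base-point $z_0$; minimality and continuity force the forward $T$-orbit of $z_0$ to be dense in each $h^{(k)}(I^{(k)})$, so after continuously extending $h^{(k)}$ to the closure of $I^{(k)}$ both images coincide with $Y:=\overline{\{T^n z_0:n\ge 0\}}$. Normalising $I^{(1)}=I^{(2)}=[0,1]$, the composition $\phi=(h^{(2)})^{-1}\circ h^{(1)}$ is an order-preserving self-homeomorphism of $[0,1]$ that fixes the endpoints and intertwines $f_{\mu^{(1)},\pi}$ with $f_{\mu^{(2)},\pi}$. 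Since minimal $3$-IETs are uniquely ergodic, $\phi_\ast$ of normalised Lebesgue on $I^{(1)}$ must equal normalised Lebesgue on $I^{(2)}$; a continuous order-preserving measure-preserving self-map of $[0,1]$ fixing the endpoints is the identity, so $\phi=\mathrm{id}$. Consequently $x_j^{(1)}=x_j^{(2)}$ for all $j$, whence $\mu^{(1)}=\mu^{(2)}$ and $h^{(1)}=h^{(2)}$.

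The step I expect to be the main obstacle is the degenerate regime: extracting pairwise commutativity of $\{T_1,T_2,T_3\}$ from the conjunction of ``$\Theta_\pi\equiv 0\pmod{2\pi}$'' with \eqref{ce22} requires slightly different bookkeeping for each of the three permutations, after which one must invoke the classification of commuting orientation-preserving planar isometries to conclude that the only $T$-invariant continua available for $h(I)$ are line segments or circle arcs.
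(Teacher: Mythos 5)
Your overall skeleton matches the paper's: a case analysis over the three irreducible permutations, a dichotomy according to whether the cycle angle sums $\Theta_\pi$ vanish, the use of the fixed points of the connecting maps (Corollary~\ref{tce3} and \eqref{ce21}) to pin down the break points $z_0,\dots,z_3$ independently of $\mu$ in the non-degenerate regime, and unique ergodicity to upgrade this to uniqueness of the pair $(\mu,h)$ — your final paragraph is in fact more careful than the paper's one-line appeal to invertibility and dense orbits. Your graph computations and the values of $\Theta_\pi$ are correct. Where you genuinely diverge is the degenerate regime. The paper does not argue via commutativity: for $\pi=(123)$ it observes that $\Theta_\pi=0$ forces $T_1=T_2$, merges $X_1\cup X_2$ into a single atom to obtain a $2$-PWI carrying the same IET as a $2$-IET, and invokes Theorem~\ref{thm:trivial2IET}; for $\pi=(13)(2)$ it derives $T_2=T_3\circ T_1$ from the cycle through $v_1$, passes to the Rauzy--Veech induced system via Theorem~\ref{RS}, recognises it as a $2$-IET embedded in a $2$-PWI, applies Theorem~\ref{thm:trivial2IET} there, and pulls triviality back. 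Your alternative — using \emph{both} cycles to get $T_2=T_1\circ T_3=T_3\circ T_1$ for $(13)(2)$, and the self-loop together with the $3$-cycle to get $T_1=T_2$ and $T_2T_3=T_3T_2$ for $(123)$ — is a correct derivation (both cycles are simultaneously degenerate since $|\Theta_\pi(j)|$ is independent of $j$) and it bypasses the induction step entirely, which is a genuine simplification of that branch.

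There is, however, a concrete gap exactly where you anticipated trouble. The classification ``a pair of commuting orientation-preserving planar isometries either share a translation direction or share a rotation centre'' is false: any two translations commute, regardless of direction. So in the subcase where all $\theta_j$ vanish, commutativity yields no invariant foliation, and you must argue separately — as the paper does in case (i) of the proof of Theorem~\ref{thm:trivial2IET} — that compactness of the invariant set $Y$ together with unique ergodicity forces the translation vectors to be collinear (the Birkhoff averages of the displacements must cancel), after which $Y$ lies on a line. You also need to dispose of the subcase where some $T_j$ is the identity, for which commutativity is vacuous; minimality excludes it since $f$ would then fix a subinterval pointwise. Finally, ``confined to a single leaf'' deserves a sentence: the leaf coordinate (distance to the common centre, or the projection transverse to the common translation direction) is a continuous $T$-invariant function on $Y$, hence a continuous $f$-invariant function on $I$, hence constant by minimality. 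With these repairs your degenerate-case argument closes, and the rest of the proposal is sound.
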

\begin{proof}
	Given $\pi \in S(3)$ and $\mu \in \R_+^3$, assume there is a minimal $3$-IET $(I,f_{\mu,\pi})$ which is continuously embedded by $h$ into a $3$-PWI $(X,T)$, with  partition $\{X_1,X_2,X_3\}$ and
	$$T(z)= e^{i \theta_j} z + \lambda_j, \quad z \in X_j.$$
Let $Y=h(I)$. We show that $(I,f_{\mu,\pi})$ and $h$ are either unique or that the embedding is trivial.
	
	Assume first that $\pi=(123)$. Then $\mathcal{G}_{\pi}$ is not a connected graph. The connecting sequence for $1$ is constant and equal to $1$, thus, from \eqref{ce20} we get
	\begin{equation}\label{otiet1}
	(e^{i(\theta_2-\theta_1)}-1)h(x_1)+(\lambda_2-\lambda_1)e^{-i\theta_1}=0.
	\end{equation}
	We have $|\Theta_{\pi}(j)|=|\Theta_{\pi}|=|\theta_2-\theta_1|$ for $j =0,...,3$.
	
	If $\Theta_{\pi}=0$ then $\theta_1=\theta_2$, and by \eqref{otiet1} we get $\lambda_1=\lambda_2$.
	
	Consider the $2$-IET $(I,f_{\mu',\pi'})$, where  $\mu'=(\mu_1+\mu_2,\mu_3)$ and $\pi'$ is the permutation  $(12)$. Consider the $2$-PWI $(X,T')$, with base partition $\{X_1',X_2'\}$, where $X_1'=X_1 \cup X_2$ and $X_2'=X_3$ and
	$$T'(z)= e^{i \theta_j'} z + \lambda_j', \quad z \in X_j',$$
	with $\theta_1'=\theta_1$, $\theta_2'=\theta_3$, $\lambda_1'=\lambda_1$ and $\lambda_2'=\lambda_3$.
	It is simple to see now that $f_{\mu',\pi'}=f_{\mu,\pi}$ and $T'=T$, thus by Theorem \ref{thm:trivial2IET} the embedding of $(I,f_{\mu,\pi})$ must be trivial.
	
	If $\Theta_{\pi} \neq 0$, \eqref{ce21} gives
	\begin{equation}\label{otiet3}
	h(x_{j})=\left(1-e^{i\Theta_{\pi}(j) }\right)^{-1}F_{j}(0), \quad j=0,...,3.
	\end{equation}
	Since $F_{j}(0)$ does not depend of $\mu$, by \eqref{otiet3} we have that for any $\mu' \in \R_+^3$, such that $(I,f_{\mu',\pi})$ is minimal, any continuous embedding $h'$ into $(X,T)$ must satisfy $h'(x'_{j})=h(x_{j})$. Since the restriction of $T$ to $Y$ must be invertible and every $z \in Y$ must have a dense orbit in $Y$ this shows that $\mu'=\mu$ and $h'=h$.

	We omit the proof for $\pi=(321)$ as it can be done in a similar way to the previous case.
	
	Finally, assume that $\pi=(13)(2)$. Then $\mathcal{G}_{\pi}$ is not a connected graph. The connecting sequence for $1$ is equal to $(1,3,...)$, and from \eqref{ce20} we get
	\begin{equation}\label{otiet4}
	(\exp{\left[-i(\theta_3+\theta_1-\theta_2)\right]}-1)h(x_1)+e^{-i\theta_1}\left[e^{-i\theta_3}(\lambda_2-\lambda_3)-\lambda_1\right]=0.
	\end{equation}
	We have $|\Theta_{\pi}(j)|=|\Theta_{\pi}|=|\theta_3+\theta_1-\theta_2|$ for $j =0,...,3$.
	
	If $\Theta_{\pi}=0$ then by \eqref{otiet4} we get
	\begin{equation}\label{otiet5}
	\theta_2= \theta_1+ \theta_3, \quad \quad \lambda_2= \lambda_1 e^{i \theta_3}+\lambda_3.
	\end{equation}
	Note that $I_3=f_{\mu,\pi}(I_{\pi^{-1}(3)})$ if and only if $\mu_1=\mu_3$. In this case we have that the restriction of $f_{\mu,\pi}$ to $I_2$ is equal to the identity map. Since $f_{\mu,\pi}$ is minimal we must have $I_3\neq f_{\mu,\pi}(I_{\pi^{-1}(3)})$, thus by Theorem \ref{RS}  there is a continuous embedding of $(I', \mathcal{R}(f_{\mu,\pi}))$ by $h$ into $(X',\mathcal{S}(T))$.
	
	We now prove that this embedding is trivial. 
	
	Assume that $f_{\mu,\pi}$ has type 1. Let $I_j$ be as in the proof of Theorem \ref{RS}. By \eqref{RS7} we have
	\begin{equation}\label{otiet6}
	\mathcal{S}(T)(z)\left\{\begin{array}{ll}
	\vspace{0.2cm}
	e^{i\theta_1}z+\lambda_1 , & z \in h(I_1'),\\
	\vspace{0.2cm}
	e^{i(\theta_1+\theta_3)}z+(\lambda_1 e^{i \theta_3}+\lambda_3) , & z \in h(I_2'),\\
	e^{i\theta_2}z+\lambda_2 , & z \in h(I_3').	
	\end{array}\right.
	\end{equation}
	Consider the $2$-IET $(I,f_{\tilde{\mu},\tilde{\pi}})$, with  $\tilde{\mu}=(\mu_1-\mu_3,\mu_2+\mu_3), \tilde{\pi}=(12)$, and the map $\tilde{T}:h(I') \rightarrow h(I')$ such that 
	$$\tilde{T}(z)= e^{i \theta_j} z + \lambda_j, \quad z \in \tilde{Y}_j,$$
	where $\tilde{Y}_1=h(I_1')$ and $\tilde{Y}_2=h(I_2'\cup I_3')$.
	It is simple to see now that $f_{\tilde{\mu},\tilde{\pi}}=\mathcal{R}(f_{\mu,\pi})$ and by \eqref{otiet5} and \eqref{otiet6} we have $\tilde{T}(z)=\mathcal{S}(T(z))$, for all $z\in h(I')$. Therefore by Theorem \ref{thm:trivial2IET} the embedding of $(I', \mathcal{R}(f_{\mu,\pi}))$ by $h$ into $(X',\mathcal{S}(T))$ must be trivial. By \eqref{eq0s2} we have that for $x \in I_3$ we have $h(x)=e^{i\theta_1}h(x-\mu_2-\mu_3)+\lambda_1$ thus the embedding of $(I,f_{\mu,\pi})$ by $h$ into $(X,T)$ must be trivial as well.
	
	We omit the proof for the case when $f_{\mu,\pi}$ has type 0 as it can be done in a similar case to the previous case.
	
	Finally, if $\Theta_{\pi} \neq 0$, by \eqref{ce21}, $h(x_{j})$ is determined by \eqref{otiet3}. Since $F_{j}(0)$ does not depend of $\mu$, we have that for any $\mu' \in \R_+^3$, such that $(I,f_{\mu',\pi})$ is minimal, any continuous embedding $h'$ into $(X,T)$ must satisfy $h'(x'_{j})=h(x_{j})$. Since the restriction of $T$ to $Y$ must be invertible and every $z \in Y$ must have a dense orbit in $Y$ this shows that $\mu'=\mu$ and $h'=h$.
\end{proof}

\section{Ergodic condition for the existence of piecewise continuous embeddings}
\label{sec:ergodic}

In this section we give a necessary condition for the existence of piecewise continuous embeddings of uniquely ergodic IETs into planar PWIs. 


Given a $d$-IET $(I,f_{\mu,\pi})$, let $I_j$ and $\tau_j$ be as in Section~1. Suppose we have a piecewise continuous embedding $h$ of this map into a $d$-PWI $(X,T)$ and suppose that $T(z)=T_j(z)$, for $z \in X_j$ with  $T_j(z)=e^{i\theta_j}z+\lambda_j$. 

For $x \in I$ and $y \in S^1$ we define the  \emph{tangent exchange map} $\Psi: I \times S^1 \rightarrow I \times S^1$ as the skew-product given by
\begin{equation}\label{eq9}
\Psi(x,y)=(f(x),y+\theta_{j(x)}).
\end{equation}

The dynamics of this map contains information on the angle of tangents of an embedding when iterated by the underlying PWI. It will be the main technical tool to prove Theorem \ref{t:ergodiccondition}. 

For $n \in \N$ we have
\begin{equation*}\label{eq9a}
\Psi^n(x,y)=(f^n(x),y+C^{(n)}(x)),
\end{equation*}
where $C^{(\cdot)}:\Z \times I \rightarrow S^1$ is the \emph{rotational cocycle} for this embedding,  given by
\begin{equation*}\label{eq8}
C^{(0)}(x)=0, \quad C^{(n)}(x)=\theta_{j(x)}+...+\theta_{j(f^{n-1}(x))} \mod 2\pi, 
\end{equation*}
for $x \in I, n\geq 0,$
and $$C^{(n)}(x) = -C^{(-n)}(x) \mod 2\pi,$$ 
for $n<0$, where $S^1=\R/2\pi\Z$ and $j(x)$ is the piecewise constant map such that $j(x)=j$ when $x\in I_j$. 

For $x \in I_j$ we define the {\it first return time of $x$ by $f_{\mu,\pi}$ to $I_j$} as
\begin{equation*}\label{eq10}
n_j(x)= \inf\{k>1: f_{\mu,\pi}^k(x)\in I_j\}.
\end{equation*}
If $f_{\mu,\pi}$ is minimal, then $n_j(x)$ is finite. The {\it first return map of $x$ by $f$ to $I_j$}, $f'_{j}:I_j\rightarrow I_j$ is then a well defined $d$-IET and is given by
\begin{equation}\label{eq10a}
f'_{j}(x)= f^{n_j(x)}(x), \quad x \in I_j.
\end{equation}
For $j=1,...,d$, we define the cocycle $N_j^{(\cdot)}:\Z\times I_j \rightarrow \Z$ as 
\begin{equation*}\label{eq10b}
N_j^{(0)}(x)=0, \quad N_j^{(k)}(x)= n_j\left( x \right)+n_j\left( f_j^{'}(x)\right)+...+n_j\left( f_j^{' k-1}(x)\right),
\end{equation*}
for $x \in I_j$ and $k>0$. For $n<0$ we set $N_j^{(k)}(x) = -N_j^{(-k)}(x)$.


\begin{figure}[t]
	\begin{subfigure}{.49\textwidth}
		\centering
		\includegraphics[width=0.5\linewidth]{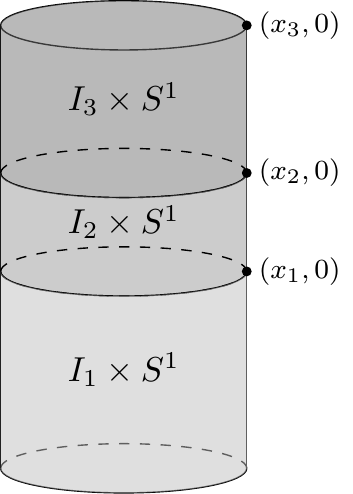}
		\caption{}
		\label{fig:bonecos2}
	\end{subfigure}
	\begin{subfigure}{.49\textwidth}
		\centering
		\includegraphics[width=0.55\linewidth]{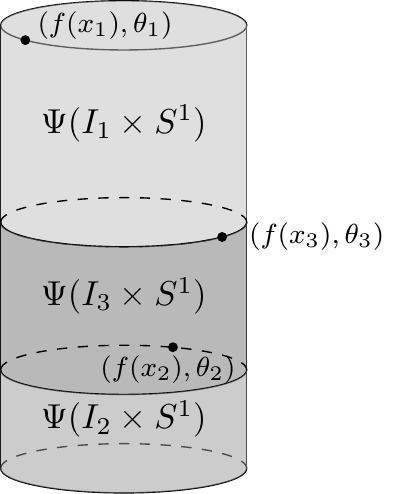}
		\caption{}
		\label{fig:bonecos2a}
	\end{subfigure}
	\centering
	\captionsetup{width=1\textwidth}
	\caption{A schematic representation of the action of a tangent exchange map $S$, as in \eqref{eq9}, on a cylinder with $\pi=(132)$. (A) The partitioned space $ I\times S^1$ in three subcylinders $I_j \times S^1$. The $x_j$ are equal to $\sum_{i\leq j}\mu_i$ respectively for $j=1,2,3$ and the points  $(x_j,0)$ are represented. (B) The action of the map $\Psi$ on $ I\times S^1$ and on the points $(x_j,0)$ which map to $(f_{\mu,\pi}(x_j),\theta_j)$ respectively for $j=1,2,3$.}
	\label{fig:cylinder}
\end{figure}

Define the sequence $\{p(n)\}_{n\geq1}$ by
\begin{equation*}\label{eq18}
p(1)=\min\{p\geq 1 : f_{\mu,\pi}^p(0) \in I_1\},
\end{equation*}
and
\begin{equation*}\label{eq19}
p(n)=\min\{p> p(n-1): f_{\mu,\pi}^p(0)<f_{\mu,\pi}^{p(n-1)}(0) \}, \quad n>1.
\end{equation*}
Note that if $f_{\mu,\pi}$ is minimal then $f_{\mu,\pi}^{p(n)}(0)\rightarrow 0$, as $n \rightarrow +\infty$.
Let
$$m_j(n)=\textrm{card}\{f_{\mu,\pi}^k(0)\in I_j:k\leq n\},$$ 
with $n \in \N, j=1,...,d,$ and
$$k_j=\min\left\{k\geq0: f_{\mu,\pi}^{k}(0)\in I_j  \right\}.$$

Denote  $x_j'=f^{k_j}(0)$, $y_j'=C^{(k_j)}(0)$. For $n \in \N$ and $j=1,...,d$, 
define the sequences
\begin{equation*}\label{eq12}
c_j(n)=y_j'+C^{\left(N_j^{(n)}(x)+1\right)}(x_j'),
\end{equation*}
and
\begin{equation}\label{eq22}
e_j(n)=\sum_{k=0}^{m_j(n)-1}\exp[-i c_j(k)].
\end{equation}
The sequence $e_j(p(n))$ is a cumulative sum of the displacement by rotation of the point $x_j'$, up to the $n$-th return to $X_j$. The limit of $e_j(p(n))$, when $n\rightarrow + \infty$, need not exist in general.
The following theorem shows that for a piecewise continuous embedding of a uniquely ergodic $(I,f_{\mu,\pi})$, as long as the average of the sequence $\{e_j(p(n))\}_n$ converges, the condition \eqref{eq25} tells us that the average of displacements by rotation and by translations, weighted by the lengths $\mu_j$'s, must cancel out so that orbits remain bounded.

\begin{theorem}\label{t:ergodiccondition}
	Assume that $(I,f_{\mu,\pi})$ is a uniquely ergodic  $d$-IET that has a piecewise continuous embedding by $h$ into a $d$-PWI $(X,T)$ with $X\subseteq \C$, where
	\begin{equation}\label{eq13}
	T(z)= e^{i\theta_j}z+\lambda_j, 
	\end{equation}
	for $z \in X_j$ and  $j=1,..,d$. If there are $\xi_j \in \C$ such that
	\begin{equation}\label{eq24}
	\lim_{n\rightarrow + \infty}\frac{1}{m_j(p(n))}e_j(p(n))=\xi_j,
	\end{equation}
	then 
	\begin{equation}\label{eq25}
	\sum_{j=1}^{d}\left(\lambda_j-h(0)(1-e^{i \theta_j})\right) \xi_j \mu_j=0.
	\end{equation}
\end{theorem}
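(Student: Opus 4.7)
The plan is to iterate the compatibility relation $h\circ f = T\circ h$ along the orbit of the point $0\in I_1$, rewrite the result so the constants $\lambda_j - h(0)(1-e^{i\theta_j})$ appear naturally, and then pass to the limit along the return subsequence $\{p(n)\}$.

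First, to make the combination $\lambda_j - h(0)(1-e^{i\theta_j})$ drop out of the calculation, I would recenter the target at $h(0)$. Setting $\tilde h(x) := h(x)-h(0)$ and $\tilde\lambda_j := \lambda_j - h(0)(1-e^{i\theta_j})$, the embedding relation on $I_j$ becomes $\tilde h(f(x)) = e^{i\theta_j}\tilde h(x) + \tilde\lambda_j$, and $\tilde h(0)=0$. Iterating at $x=0$ and writing $j_k := j(f^k(0))$ gives
\begin{equation*}
\tilde h(f^n(0)) = \sum_{k=0}^{n-1} e^{i(C^{(n)}(0)-C^{(k+1)}(0))}\,\tilde\lambda_{j_k},
\end{equation*}
so multiplying by $e^{-iC^{(n)}(0)}$ yields $e^{-iC^{(n)}(0)}\tilde h(f^n(0)) = \sum_{k=0}^{n-1} e^{-iC^{(k+1)}(0)}\tilde\lambda_{j_k}$.

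Next, I would group the right-hand sum by atom. Writing $C^{(k+1)}(0) = C^{(k)}(0)+\theta_{j_k}$ and collecting terms with $j_k=j$, the partial sum is $\sum_j \tilde\lambda_j e^{-i\theta_j}\bigl(\sum_{k<n,\;j_k=j} e^{-iC^{(k)}(0)}\bigr)$. Unwinding the definition of $e_j(n)$: the visits of $0$ to $I_j$ before time $n$ occur at indices $k_j + N_j^{(l)}(x_j')$ for $l=0,\dots,m_j(n)-1$, and at such a visit $C^{(k_j+N_j^{(l)}(x_j'))}(0) = y_j' + C^{(N_j^{(l)}(x_j'))}(x_j')$. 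The ``$+1$'' in the definition of $c_j(l)$ contributes exactly the extra factor $e^{-i\theta_j}$, which cancels the one in front. Thus for every $n$,
\begin{equation*}
e^{-iC^{(n)}(0)}\bigl(h(f^n(0))-h(0)\bigr) \;=\; \sum_{j=1}^d \bigl(\lambda_j - h(0)(1-e^{i\theta_j})\bigr)\, e_j(n).
\end{equation*}
This bookkeeping — matching the first-hit offset $(k_j,y_j')$, the return cocycle $N_j^{(l)}$, and the ``$+1$'' with the grouped sum — is the step most prone to index errors and is, I expect, the main obstacle.

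Finally, I would evaluate the above identity along the subsequence $n=p(n)$ after dividing by $p(n)$. By construction the points $f^{p(n)}(0)$ lie in $I_1$ and form a strictly decreasing sequence; by minimality they converge to $0$, and continuity of $h|_{I_1}$ at $0$ gives $h(f^{p(n)}(0)) - h(0) \to 0$. Since $|e^{-iC^{(p(n))}(0)}|=1$, the left-hand side divided by $p(n)$ tends to $0$. For the right-hand side, write
\begin{equation*}
\frac{e_j(p(n))}{p(n)} \;=\; \frac{e_j(p(n))}{m_j(p(n))}\cdot\frac{m_j(p(n))}{p(n)};
\end{equation*}
the first factor converges to $\xi_j$ by hypothesis \eqref{eq24}, while the second converges to $\mu_j/\sum_k \mu_k$ by unique ergodicity applied to the Birkhoff averages of $\mathbb{1}_{I_j}$ at the point $0$ (unique ergodicity of IETs gives pointwise convergence at every point, in particular along $p(n)$). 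Equating both limits and clearing the positive factor $1/\sum_k\mu_k$ yields $\sum_{j=1}^d \bigl(\lambda_j - h(0)(1-e^{i\theta_j})\bigr)\xi_j\mu_j = 0$, which is precisely \eqref{eq25}.
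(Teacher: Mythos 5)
Your proposal is correct and follows essentially the same route as the paper's proof: recentering at $h(0)$ so that $\tilde\lambda_j=\lambda_j-h(0)(1-e^{i\theta_j})$ and $\tilde h(0)=0$, iterating the conjugacy along the orbit of $0$ to obtain $e^{-iC^{(n)}(0)}\tilde h(f^n(0))=\sum_j\tilde\lambda_j e_j(n)$, sending $n=p(n)\to\infty$ using continuity of $h|_{I_1}$ at $0$, and combining the hypothesis \eqref{eq24} with unique ergodicity ($m_j(p(n))/p(n)\to\mu_j/|I|$) to extract \eqref{eq25}. The index bookkeeping you flag as the delicate step does check out exactly as you describe (the $+1$ in $c_j(l)$ absorbs the $\theta_{j_k}$ from $C^{(k+1)}(0)=C^{(k)}(0)+\theta_{j_k}$), and is precisely the identification the paper makes between its equations \eqref{eq20} and \eqref{eq23}.
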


\begin{proof}
	We begin by proving that there is an orientation preserving PWI $(\tilde{X},\tilde{T})$, conjugated by a translation to $(X,T)$, such that $(I,f_{\mu,\pi})$ has a piecewise continuous embedding by $\tilde{h}$ into $(\tilde{X},\tilde{T})$ with $\tilde{h}(0)=0$.
	
	Let $\tilde{X}=\{z \in \C: z+h(0) \in X\}$, and $q:X \rightarrow \tilde{X}$ be such that $q(z)=z-h(0)$. Let
	\begin{equation*}
	\tilde{T}(z)=q\circ T \circ q^{-1}(z),
	\end{equation*}
for $z \in \tilde{X}$. The homeomorphism $\tilde{h}= q \circ h$  conjugates $(I,f)$ to $(\tilde{h}(I),\tilde{T})$, with $\tilde{h}(I)\subseteq\tilde{X}$ invariant for $(\tilde{X},\tilde{T})$. Moreover, $\tilde{h}(0)=q(h(0))=0$. Note that we have
	\begin{equation*}
	\tilde{T}(z)=e^{i \tilde{\theta}_j}z+\tilde{\lambda}_j, 
	\end{equation*}
for $z \in \tilde{X}_j,$	 where $\tilde{X}_j=\{z \in \C: z+h(0) \in X_j\}$, $\tilde{\theta_j}=\theta_j$ and $\tilde{\lambda_j}=\lambda_j-h(0)(1-e^{i \theta_j})$.

	We now prove that 
	\begin{equation}\label{eq23}
	\lim_{p\rightarrow + \infty}\sum_{j=1}^{d}\tilde{\lambda_j} e_j(n(p))=0.
	\end{equation}

	Since $(I,f_{\mu,\pi})$ has a piecewise continuous embedding by $\tilde{h}$ into $(\tilde{X},\tilde{T})$, we have 
	\begin{equation}\label{eq14}
	\tilde{h}(x+\tau_j)= e^{i\theta_j}\tilde{h}(x)+\tilde{\lambda}_j, 
	\end{equation}
for $x \in I_j, j=1,..,d.$ Let $\tilde{Y}=\tilde{h}(I)$, $\tilde{Y}_j=\tilde{Y} \cap \tilde{X}_j$ and $\tilde{h}_j:I_j \rightarrow \tilde{Y}_j$ be the restriction of $\tilde{h}$ to $I_j$. From \eqref{eq14} we get 
	\begin{equation*}\label{eq15}
	\tilde{h}_j(x)= e^{-i\theta_j}(\tilde{h}_k(x+\tau_j)-\tilde{\lambda}_j), 
	\end{equation*}
where $x \in f_{\mu,\pi}^{-1}(I_k),$ and  $j=1,..,d.$	
	
	Recall the itinerary of $x$ as in \eqref{eq6}. It can be proved by induction that for $ x \in I,  n\in \N$ we have
	\begin{equation}\label{eq16}
	\tilde{h}_{i_0}(x)= \exp\left[-i\sum_{k=0}^{n-1}\theta_{i_k}\right]\tilde{h}_{i_n}(f_{\mu,\pi}^n(x))-\sum_{k=0}^{n-1}\tilde{\lambda}_{i_k}\exp\left[-i\sum_{l=0}^{k}\theta_{i_l}\right], 
	\end{equation}
	
	Since  $\tilde{h}(0)=0$, taking $x=0$ in \eqref{eq16} we get
	\begin{equation}\label{eq17}
	\exp\left[-i\sum_{k=0}^{n-1}\theta_{i_k}\right]\tilde{h}_{i_n}(f_{\mu,\pi}^n(0))-\sum_{k=0}^{n-1}\tilde{\lambda}_{i_k}\exp\left[-i\sum_{l=0}^{k}\theta_{i_l}\right]=0 , 
	\end{equation}
for $n\in \N$.	
	
	Note that $\tilde{h}_j: I_j \rightarrow \tilde{Y}_j$ is a homeomorphism for $j=1,...,d$. By  continuity of $\tilde{h}_1$ and \eqref{eq17} 
	\begin{equation}\label{eq20}
	\left| \tilde{h}_1(f_{\mu,\pi}^{p(n)}(0))-\tilde{h}_1(0)\right| = \left| \sum_{k=0}^{p(n)-1}\lambda_{i_k}\exp\left[-i\sum_{l=0}^{k}\theta_{i_l}\right] \right| \xrightarrow{n\rightarrow+\infty} 0.
	\end{equation}
	By \eqref{eq22}, \eqref{eq20} is equivalent to \eqref{eq23}.
	
	We now show that 
	\begin{equation}\label{eq25a}
	\sum_{j=1}^{d}\tilde{\lambda}_j\xi_j \mu_j=0.
	\end{equation}
	Since $(I,f_{\mu,\pi})$ is uniquely ergodic with respect to Lebesgue measure, 
	\begin{equation}\label{eq26}
	\lim_{n\rightarrow + \infty}\frac{m_j(p(n))}{p(n)}=\frac{\mu_j}{|I|}, 
	\end{equation}
for $j=1,...,d.$	
	
	Note that \eqref{eq24} is equivalent to
	\begin{equation}\label{eq27}
	e_j(p(n))=m_j(p(n))\xi_j+o(p(n)), \quad j=1,...,d.
	\end{equation}
	
	Combining \eqref{eq26} and \eqref{eq27} we have
	\begin{equation*}\label{eq28}
	e_j(p(n))= p(n) \frac{m_j(p(n))}{p(n)}\frac{1}{m_j(p(n))}e_j(p(n))=(p(n)+o(p(n)))\frac{\mu_j}{|I|} \xi_j, 
	\end{equation*}
for $j=1,...,d,$	
	and we get
	\begin{equation}\label{eq29}
	\sum_{j=1}^{d}\tilde{\lambda}_j e_j(p(n))=\sum_{j=1}^{d}(p(n)+o(p(n)))\tilde{\lambda}_j  \mu_j \xi_j.
	\end{equation}
	
	Since $(I,f_{\mu,\pi})$  has a piecewise continuous embedding into $(X,T)$, \eqref{eq23} holds. Thus \eqref{eq29} implies that
	\begin{equation*}\label{eq30}
	\lim_{n\rightarrow + \infty}\sum_{j=1}^{d}(p(n)+o(p(n)))\tilde{\lambda}_j  \mu_j \xi_j=0,
	\end{equation*}
	which can only hold if \eqref{eq25a} is true, as desired. Finally note that \eqref{eq25a} is equivalent to \eqref{eq25}, and the proof is complete.
\end{proof}

Condition \eqref{eq24} is not simple to verify in general since $c_j(n)$ is determined by two cocycles. However under some assumption on $\theta_j$ we can identify $c_j(n)$ with an orbit of a point by interval exchange map and  compute the $\xi_j$ as spatial averages using the ergodic theorem.

\begin{corollary}\label{t2}
	Assume that $(I,f_{\mu,\pi})$ is a uniquely ergodic $d$-IET with a piecewise continuous embedding by $h$ into a $d$-PWI $(X,T)$  as in \eqref{eq13}. Let $\chi_{I_j}$ denote the characteristic function of $I_j$. If 
	\begin{equation}\label{eq31}
	\theta_j=\frac{2\pi}{|I|}\tau_j,
	\end{equation}
	for $j=1,...d$, then 
	\begin{equation}\label{eq31a}
	\int_{I}\left(\sum_{j=1}^{d} \left(\lambda_j-h(0)(1-e^{i \theta_j})\right)\chi_{I_j}(f^{-1}(x))\right)e^{-2\pi i x}dx=0.
	\end{equation}
\end{corollary}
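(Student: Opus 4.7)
The plan is to apply Theorem~\ref{t:ergodiccondition} after verifying its hypothesis \eqref{eq24} with an explicit formula for the limits $\xi_j$, obtained via Birkhoff's ergodic theorem. The assumption $\theta_j = \frac{2\pi}{|I|}\tau_j$ is precisely what is needed to make the rotational cocycle commensurable with the translation dynamics on $I$, turning the abstract average in \eqref{eq24} into a spatial Lebesgue integral.

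The key observation is that, under the hypothesis,
\[
C^{(k)}(x) = \frac{2\pi}{|I|}\sum_{l=0}^{k-1}\tau_{i_l(x)} \equiv \frac{2\pi}{|I|}\bigl(f^k(x)-x\bigr) \pmod{2\pi},
\]
since $\sum_{l=0}^{k-1}\tau_{i_l(x)}$ and $f^k(x)-x$ differ by an integer multiple of $|I|$ (the number of wraps inside $I$). Applying this to $y_j' = C^{(k_j)}(0)$, which becomes $\frac{2\pi}{|I|}x_j'$ mod $2\pi$, and to $C^{(N_j^{(k)}(x_j')+1)}(x_j')$, the definition of $c_j(k)$ telescopes to
\[
c_j(k) \equiv \frac{2\pi}{|I|}\, f\!\bigl(f_j'^{\,k}(x_j')\bigr) \pmod{2\pi},
\]
using that $f^{N_j^{(k)}(x_j')}(x_j') = f_j'^{\,k}(x_j')$ by the very definition of $N_j^{(k)}$.

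With $g(x) := \exp\!\bigl[-i\tfrac{2\pi}{|I|} f(x)\bigr]$, the partial sum \eqref{eq22} is then exactly the Birkhoff sum
\[
e_j(p(n)) = \sum_{k=0}^{m_j(p(n))-1} g\!\bigl(f_j'^{\,k}(x_j')\bigr).
\]
Since $(I,f_{\mu,\pi})$ is uniquely ergodic with respect to Lebesgue measure, so is the first-return map $(I_j, f_j')$, with invariant measure $\mu_j^{-1}\mathrm{Leb}|_{I_j}$. Birkhoff's ergodic theorem therefore gives
\[
\xi_j \;=\; \lim_{n\to\infty}\frac{e_j(p(n))}{m_j(p(n))} \;=\; \frac{1}{\mu_j}\int_{I_j} \exp\!\left[-i\tfrac{2\pi}{|I|} f(x)\right]dx,
\]
which verifies \eqref{eq24} and triggers Theorem~\ref{t:ergodiccondition}.

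It only remains to rewrite \eqref{eq25}. Substituting the values of $\xi_j$ into \eqref{eq25} gives $\mu_j\xi_j = \int_{I_j}\exp\!\bigl[-i\tfrac{2\pi}{|I|}f(x)\bigr]\,dx$. Performing the change of variables $y=f(x)$ on each $I_j$ (on which $f$ is a translation, hence measure preserving) rewrites each summand as $\int_{f(I_j)} e^{-i\frac{2\pi}{|I|} y}\,dy = \int_I \chi_{I_j}\!\circ\! f^{-1}(y)\,e^{-i\frac{2\pi}{|I|} y}\,dy$, and pulling the finite sum inside the integral produces \eqref{eq31a}. The main obstacle is the cocycle bookkeeping that collapses $c_j(k)$ to a single evaluation of $f$ along the first-return orbit of $x_j'$; once that geometric identification is in place, the remainder is a routine application of Birkhoff's theorem followed by a change of variables.
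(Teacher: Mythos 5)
Your proposal is correct and follows essentially the same route as the paper: identify $c_j(k)$ with $\tfrac{2\pi}{|I|}\,f\circ f_j'^{\,k}(x_j')$ using \eqref{eq31}, invoke unique ergodicity of the first-return maps $(I_j,f_j')$ and the ergodic theorem to obtain $\xi_j=\mu_j^{-1}\int_{f(I_j)}e^{-2\pi i x}\,dx$, and then apply Theorem~\ref{t:ergodiccondition}. You merely spell out the cocycle telescoping and the final change of variables, which the paper leaves implicit.
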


\begin{proof}
	Let $f_j':I_j\rightarrow I_j$ be as in \eqref{eq10a}. By \eqref{eq31} we have 
	\begin{equation*}\label{eq33}
	c_j(n)=\frac{2\pi}{|I|}f \circ f_j^{'n}(x_j'),
	\end{equation*}
	 Since $(I,f_{\mu,\pi})$ is uniquely ergodic, it follows that $(I_j,f_j')$ is also uniquely ergodic. Thus, the ergodic theorem implies that
	\begin{equation}\label{eq34}
	\lim_{n\rightarrow + \infty}\frac{1}{m_j(p(n))}\sum_{k=0}^{m_j(p(n))-1}\exp\left[-\frac{2\pi i}{|I|}f \circ f_j^{'k}(x_j')\right] = \frac{1}{\mu_j}\int_{f(I_j)}\exp\left[-2\pi i x\right]dx, 
	\end{equation}
for $j=1,...,d.$	
	
Let $\xi_j=\frac{1}{\mu_j}\int_{f(I_j)}\exp\left[-2\pi i x\right]dx$, for $j=1,...,d$. Combined with \eqref{eq22} and \eqref{eq34} we get
	\begin{equation*}\label{eq35}
	\lim_{n\rightarrow + \infty}\frac{1}{m_j(p(n))}e_j(p(n))=\xi_j, 
	\end{equation*}
for $j=1,...,d,$	and thus by Theorem \ref{t:ergodiccondition} we must satisfy \eqref{eq25} which is equivalent to \eqref{eq31a}.	This completes the proof.
\end{proof}

\section{Evidence of non-trivial embeddings of IETs into PWIs}
\label{sec:example}

In this section we present some numerical evidence of non-trivial continuous embeddings of IETs in PWIs. In order to do this we first define a PWI on 3 atoms that apparently exhibits a single invariant curve that is the image of a non-trivial embedding of a $3$-IET. We also introduce a new family of PWIs that include linear embeddings of $2$-IETs and apparently many non-trivial embeddings of $4$-IETs.

\subsection{A PWI with an embedded three interval exchange} We now present an example of a $3$-PWI for which numerical evidence suggests the existence of a non-trivial embedded $3$-IET.

Let $\alpha'=1.3$, $\beta'=0.75$, $z_0'=0$, $z_1'=0, 0.215998 + i 0.168125$, $z_2'=0.491520 + i 0.051612$, $z_3'=0.586452$ and the convex sets
\begin{equation*}
\begin{array}{l}
\vspace{0.2cm}
Q_1'=\{z\in \C: \Im(e^{i\alpha'}(z-z_1'))<0 \},\\
\vspace{0.2cm}
Q_2'=\{z\in \C: \Im(e^{-i\beta'}(z-z_2')) > 0 \ \textrm{and} \ \Im(e^{i\alpha'}(z-z_1')) \geq 0 \},\\
\vspace{0.2cm} 
Q_3'=\{z\in \C:  \Im(e^{-i\beta'}(z-z_2')) \leq 0 \ \textrm{and} \ \Im(e^{i\alpha'}(z-z_1')) \geq 0 \}.
\end{array}
\end{equation*}

Consider the PWI $T':\C \rightarrow \C$ such that
\begin{equation}\label{Tprime}
T'(z)=e^{i \theta_j'}z +\lambda_j' ,  \quad z \in Q_j',
\end{equation}
for $j=1,2,3$, where
\begin{equation}\label{tetlambda}
\begin{array}{ll}
\theta_j'=\left\{\begin{array}{ll}
\vspace{0.2cm}
4.460361, & j=1,\\
\vspace{0.2cm}
0.800153, & j=2,\\
0.995933, & j=3,
\end{array}\right.
\\
\\
\begin{array}{ll}
\lambda_j'=\left\{\begin{array}{ll}
\vspace{0.2cm}
z_3'-e^{i\theta'_1}z_1', & j=1,\\
\vspace{0.2cm}
e^{i\theta'_3}(z_3'-z_2')-e^{i\theta'_2}z_1', & j=2,\\
e^{i\theta'_3}z_2', & j=3,
\end{array}\right.
\end{array}
\end{array}
\end{equation}
and set $Y'=\overline{\{T'^n(z_0')\}_{n\in \N}}$. Figure \ref{fig:3PWI} shows the action of the map $T'$, in particular in Figure \ref{fig:3PWI} (A) we can see $Y'$ and in Figure \ref{fig:3PWI} (B) its image by $T'$.
\begin{figure}[t]
	\begin{subfigure}{0.49\textwidth}
		\centering
		\includegraphics[width=1\linewidth]{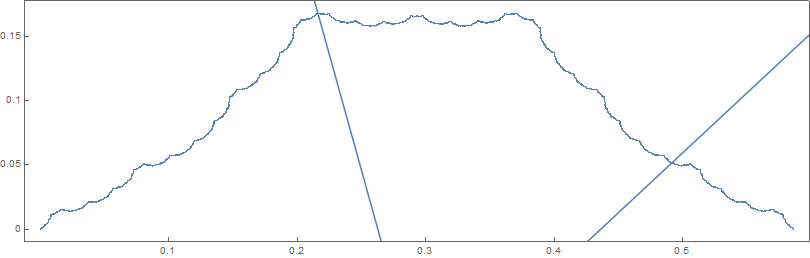}
		\caption{}
		\label{fig:3PWIa}
	\end{subfigure}
	\begin{subfigure}{0.49\textwidth}
		\centering
		\includegraphics[width=1\linewidth]{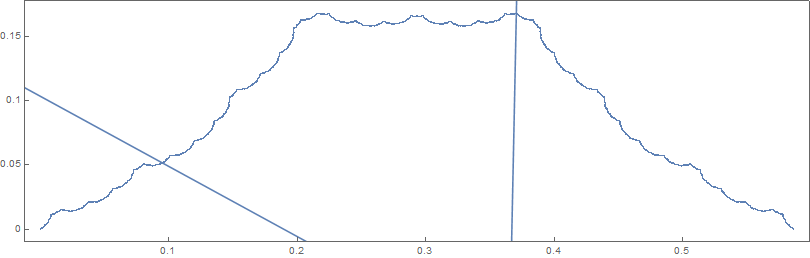}
		\caption{}
		\label{fig:3PWIb}
	\end{subfigure}\\
	\begin{subfigure}{0.49\textwidth}
	\centering
	\includegraphics[width=1\linewidth]{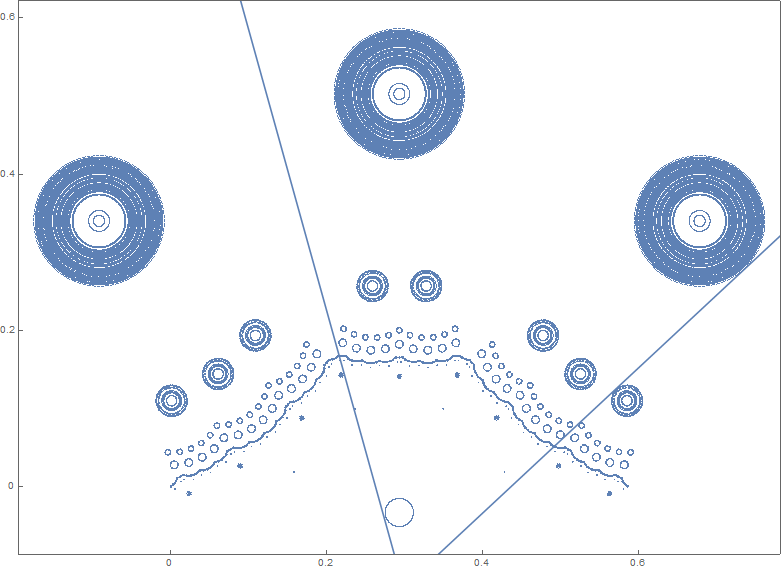}
	\caption{}
	\label{fig:3PWIc}
	\end{subfigure}
	\begin{subfigure}{0.49\textwidth}
	\centering
	\includegraphics[width=1\linewidth]{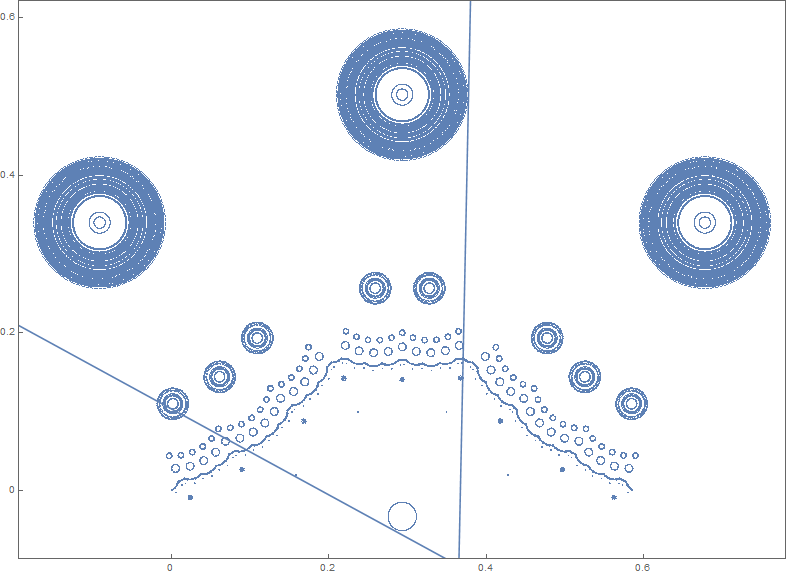}
	\caption{}
	\label{fig:3PWId}
	\end{subfigure}\\
	\centering
	\captionsetup{width=1\textwidth}
	\caption{An illustration of the action of the piecewise isometry $T'$.
		(A) An invariant set $Y'$  and the partition $\{Q_j'\}_{j=1,2,3}$. (B) Image of $Y'$ by $T'$.
	(C) Orbits of 40 points, including $z_0$, (ignoring a transient) under $T'$  and the partition $\{Q_j'\}_{j=1,2,3}$. (D) Image of the orbits and the partition in (C) by $T'$.}
	\label{fig:3PWI}
\end{figure}
Consider the family $\mathcal{F}_3$ of 3-IETs $f_{\mu,\pi'}:I\rightarrow I$ given by subdividing the interval into four intervals of lengths $\mu=(\mu_1,\mu_2,\mu_3) \in \R_+^4$ with base permutation $\pi'=(2)(13)$.

We can partition $Y'$ by setting $Y_j'=Y' \cap Q_j'$, for $j=1,2,3$. The length $l_j'=Leb(Y_j')$ of each $Y_j'$ can be numerically estimated to be $$l_1' = 0.3910666426, l_2' =0.4553369973, l_3' = 0.1535963601.$$ Fix $\mu = (l_1',l_2',l_3')$ and consider the IET $(I,f_{\mu,\pi'}) \in \mathcal{F}_3$.
Numerical evidence suggests that there is a continuous embedding of $(I,f_{\mu,\pi'})$ into $(\C,T')$, by a map $h':I\rightarrow Y'$ with $Y' \subseteq \C$, such that $h(0)=z_0'$. Note that $\mathcal{G}_{\pi'}$ is not a connected graph so we are not in the conditions of Theorem \ref{thm:necessary}. However by \eqref{tetlambda} it is simple to check that  \eqref{ce17} and \eqref{ce21} are satisfied.
 Indeed numerical verification shows that $i_k'{R(h(0))}=i_k(f_{\mu,\pi'}(0))$ for all $k \leq 6 \times 10^4$, supporting that $h'$ is a symbolic embedding.

We can also verify numerically that the condition in Theorem \ref{t:ergodiccondition} holds for this case. We estimate $\xi_j' \simeq \frac{e_j(p(8))}{m_j(p(8))}$ where $\xi_1' \simeq -0.453 + 0.651 i$, $\xi_2' \simeq 0.326 + 0.669 i$ and $\xi_3' \simeq 0.417 + 0.679 i$. For these estimates we get
$$
\left|\sum_{j=1}^{d}\lambda_j'\xi_j' \mu_j - h'(0)\sum_{j=1}^{d}(1-e^{i\theta_j'})\xi_j' \mu_j\right| \simeq 1.19 \times 10^{-5}.
$$

\subsection{A planar piecewise isometry with four cones} We now  introduce a new family of PWIs that include a linear embedding of a $2$-IET and, apparently an infinite number of non-trivial embeddings of $4$-IETs.

For any $\beta \in (0,\frac{\pi}{2})$ and $\alpha \in (0,\pi-2\beta)$ and $\lambda \in \mathbb{R}^+$ we consider a partition of $\C$ into four atoms 
$$\begin{array}{l}
P_0=\{ z \in \C: \arg(z) \in [-\beta,\beta) \}\cup \{0\},\\
P_1=\{ z \in \C: \arg(z) \in [\beta,\alpha+\beta) \},\\
P_2=\{ z \in \C: \arg(z) \in [\alpha+\beta,\pi-\beta) \},\\
P_3=\{ z \in \C: \arg(z) \in [\pi-\beta,2\pi-\beta) \},
\end{array}$$
and define a map $T:\mathbb{C} \rightarrow \mathbb{C}$ by $T(z)=T_j(z)$, for $z \in P_j$, where
\begin{equation}\label{e:pwi4}
T_j(z)=\begin{cases}
z-1,  & z \in j=0,\\
ze^{i\vartheta_1} -(1-\lambda), &z \in j=1,\\
z e^{i \vartheta_2} -(1-\lambda), &z \in j=2,\\
z+ \lambda, &z \in j=3,
\end{cases}
\end{equation}
and $\vartheta_1= \pi - 2 \beta - \alpha,$ $\vartheta_2=-\alpha$. An example is illustrated in Figure~\ref{fig_iet_cones}: note that this map is not invertible. We define the maximal invariant set for this map as $X\subset \C$.  Note that $T$ restricted to the real line reduces to a $2$-IET on 
$[-1,\lambda)$ that equivalent to interchange of intervals of length $1$ and $\lambda$. We refer to this as the {\em baseline} transformation.

\begin{figure}[t]
	\mbox{\includegraphics[width=12cm]{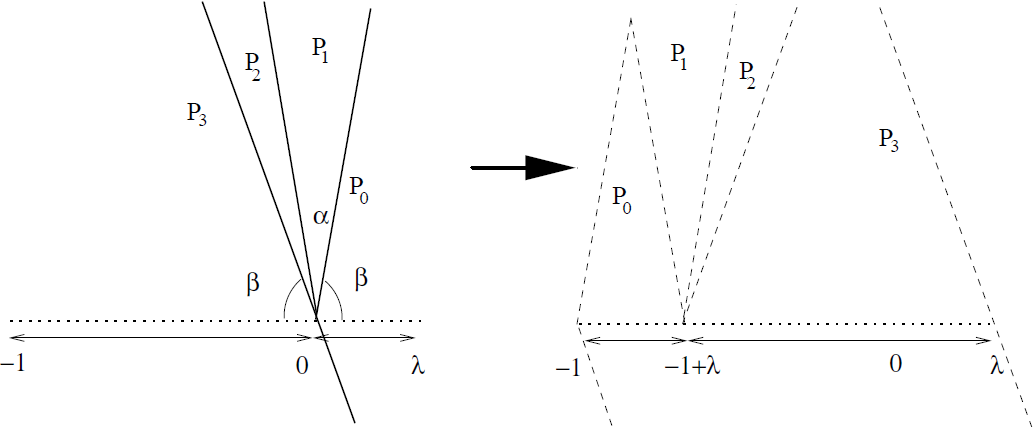}}
	\captionsetup{width=1\textwidth}
	\caption{
		\label{fig_iet_cones}
		Schematic representation of a family of $4$-PWIs $T:\C\rightarrow \C$ with atoms given by the four cones $P_i$, and three parameters: $\alpha,\beta$ and $\lambda$. The atoms $P_0$ and $P_3$ are translated by $T$ while $P_1$ and $P_2$ are rotated about their vertices then translated. The map on the baseline $[-1,\lambda)$ is a $2$-IET.
	}
\end{figure}

This map is such that all vertices of atoms that touch the baseline are mapped to the baseline. This means that although $T$ is not invertible, it is locally bijective near the base line. The middle cones $P_1$ and $P_2$ are swapped by two rotations and after this, $P_1$ and  $P_2$ are translated by $-(1-\lambda)$.

%
%
%
%


We define the first return map $R: P_1 \cup P_2 \rightarrow P_1 \cup P_2$, as
\begin{equation}\label{n1}
  R(z)=T^{k(z)}(z).
\end{equation}
where $k(z)=\inf\{k\geq 1: T^{k}(z) \in P_1 \cup P_2\}$. If $\lambda$ is irrational then every point enters $P_1\cup P_2$ after a finite number of iterates, and hence in this case $R$ can be used to characterise all orbits of the map.


\begin{figure}[t]
	\begin{subfigure}{.45\textwidth}
		\centering
		\includegraphics[width=1\linewidth]{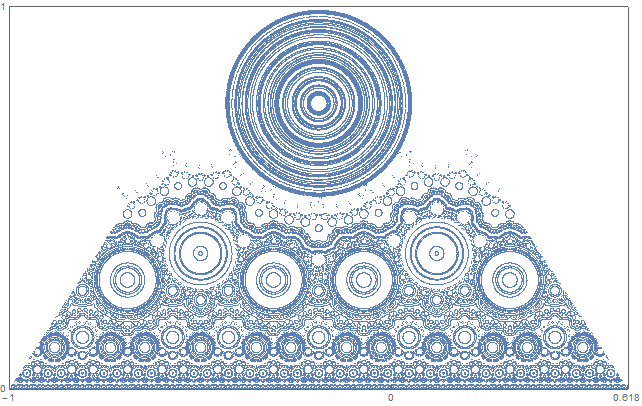}
		\caption{}
		\label{fig:alpha05beta1lambdagoldengamma0}
	\end{subfigure}
	\begin{subfigure}{.45\textwidth}
		\centering
		\includegraphics[width=1\linewidth]{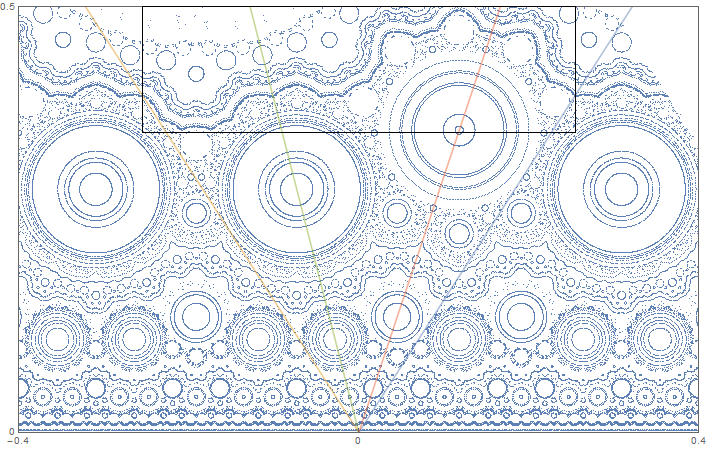}
		\caption{}
		\label{fig:renormgolden}
	\end{subfigure}
	\centering
	\captionsetup{width=1\textwidth}
	\caption{(A) Orbits of 200 points (ignoring a transient) by $T$, for $(\alpha, \beta, \lambda)= (0.5, 1,\frac{\sqrt{5}-1}{2})$. (B) Details of (A) in the area $[-0.4,0.4]\times[0,0.5]$. The cone indicates the location of $P_1\cup P_2$. In this and later figures, orbits of length $10^5$ are generated after removing a transient of 100 iterates. The maximal invariant set appears to have a highly complex boundary, but it does appear to include a polygon containing the baseline. The boxed region contains what seem to be many invariant non smooth curves.}
	\label{fig:Attractors1}
\end{figure}

\begin{figure}[t]
	\begin{subfigure}{.7\textwidth}
		\centering
		\includegraphics[width=1\linewidth]{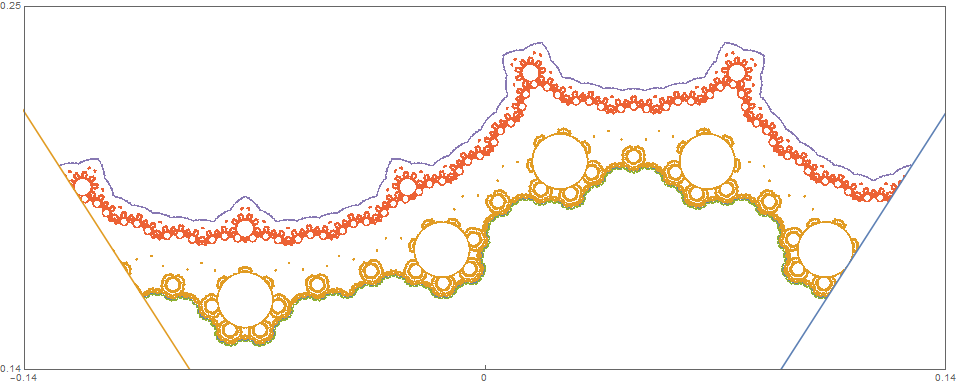}
		\caption{}
		\label{fig:InvariantRange014014014025}
	\end{subfigure}\\
	\begin{subfigure}{.32\textwidth}
		\centering
		\includegraphics[width=1\linewidth]{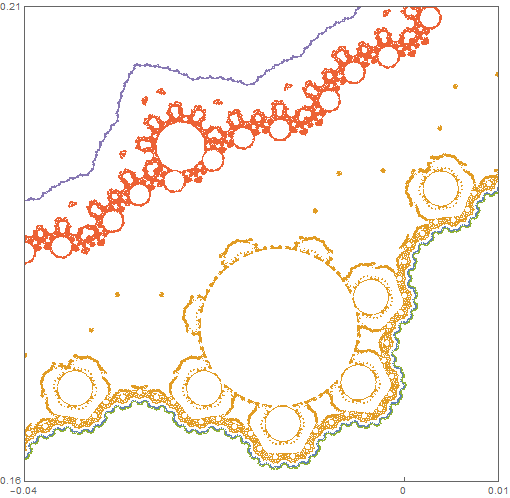}
		\caption{}
		\label{fig:InvariantRange004001016021}
	\end{subfigure}
	\begin{subfigure}{.38\textwidth}
		\centering
		\includegraphics[width=1\linewidth]{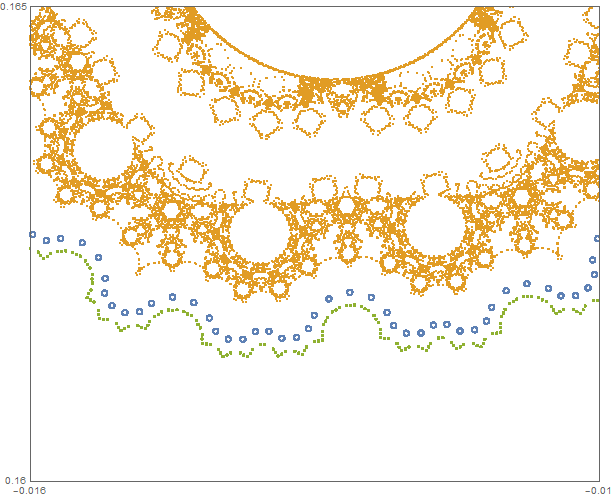}
		\caption{}
		\label{fig:InvariantRange00160010160165}
	\end{subfigure}
	\centering
	\captionsetup{width=1\textwidth}
	\caption{ (A) Orbits of 5 points (ignoring a transient) under $R$, for $(\alpha, \beta, \lambda)= (0.5, 1,\frac{\sqrt{5}-1}{2})$ 
		in the area $[-0.14,0.14]\times[0,0.25]$. (B) Details of (A) in the area  $[-0.04,-0.01]\times[0.16,0.21]$.  (C) Details of (A) in the area $[-0.0016,-0.01]\times[0.16,0.165]$. Observe a complex pattern of periodic islands, the presence of non-trivially embedded IETs as well as orbits with more complex structure.}
	\label{fig:dual}
\end{figure}


For typical choices of parameters $\alpha$, $\beta$ and $\lambda$ it seems that the dynamics of $T$ defined by (\ref{e:pwi4}) (and hence of $R$) is very rich.
Figure \ref{fig:Attractors1} (A) shows typical trajectories (after a transient), for two hundred randomly selected points and $(\alpha, \beta, \lambda)= (0.5, 1,\frac{\sqrt{5}-1}{2})$. Details of some invariant sets are then shown in Figure \ref{fig:Attractors1} (B). These numerical simulations illustrate that (as expected \cite{AG06,AG06a}) the map $T$ has an abundance of periodic islands for typical values of the parameters.

Figure \ref{fig:dual} (A) shows the orbits of 5 points (ignoring a transient) under  $R$, for $(\alpha, \beta, \lambda)= (0.5, 1,\frac{\sqrt{5}-1}{2})$. Details of this are shown in Figure \ref{fig:dual} (B) and (C) in the areas  $[-0.04,-0.01]\times[0.16,0.21]$ and $[-0.0016,-0.01]\times[0.16,0.165]$ respectively.

%
%

These figures show the diverse types of behaviour that can be found in the invariant sets of $R$ (and hence $T$). They show what seem to be non-trivial embedded IETs as well as invariant sets of higher dimension. There are also periodic islands to which the return map is a rotation.


Numerical results show that for some parameters we can observe non smooth invariant curves for the dynamics of the map $R$ as defined in equation \eqref{n1}. These curves appear to have a dynamics similar to that of an interval exchange transformation. These curves can bound invariant regions that exhibit quite complex dynamics. 
We now construct one such region: set $\alpha=0.5$, $\beta=1$,  $\lambda=\frac{\sqrt{5}-1}{2}$ and $\eta=1-\lambda$. Consider the points 
$$
z_0=r_0 e^{i(\pi-\beta)}, \quad z_1=r_1 e^{i(\pi-\beta)},
$$
with $r_0 = 0.470$ and $r_1=0.503$ and denote the orbit closures of these points as $\Xi'$ and $\Xi''$. These are contained in the boxed region in Figure \ref{fig:Attractors1} (B) and are also represented in Figure \ref{fig:invstrip} where it can be seen that both $\Xi'$ and $\Xi''$ appear to be non-trivial continuous embeddings of IETs.
\begin{figure}[t]
	\begin{subfigure}{1\textwidth}
		\centering
		\includegraphics[width=1\linewidth]{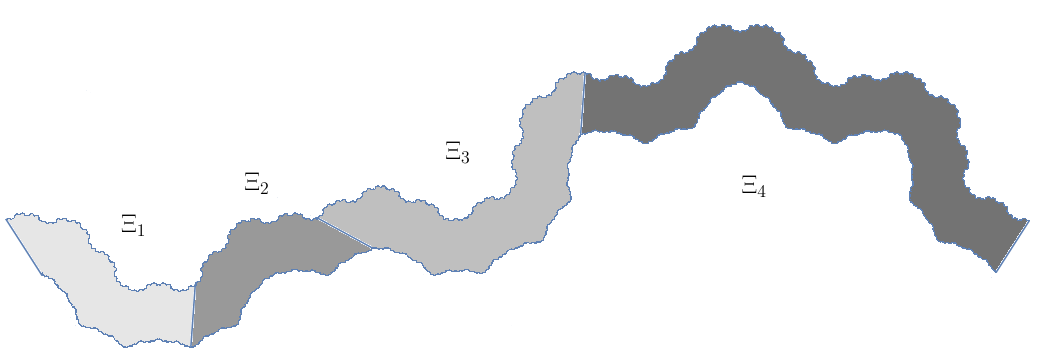}
		\caption{}
		\label{fig:invstrip1}
	\end{subfigure}\\
	\begin{subfigure}{1\textwidth}
		\centering
		\includegraphics[width=1\linewidth]{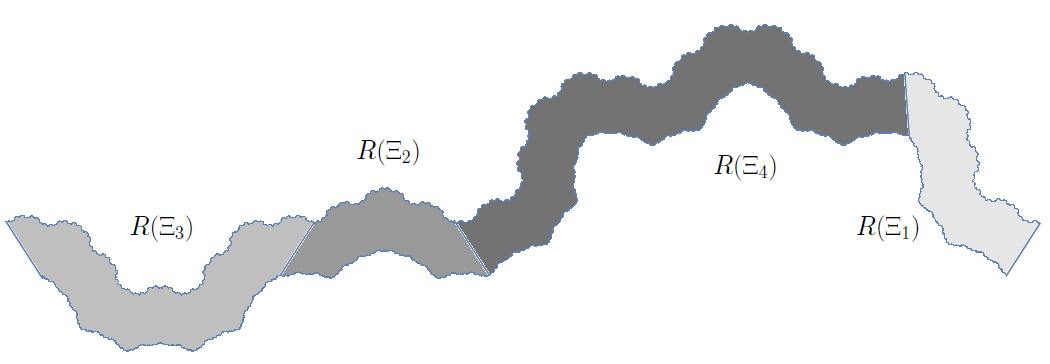}
		\caption{}
		\label{fig:invstrip2}
	\end{subfigure}
	\centering
	\captionsetup{width=1\textwidth}
	\caption{(A) The presumably invariant region $\Xi=\Xi_1 \cup \Xi_2 \cup \Xi_3 \cup \Xi_4$. (B) Image of $\Xi$ by $R$.}
	\label{fig:invstrip}
\end{figure}
Now consider the sets 
$$
Q_L'=\{z \in \C: \arg(z)= \pi-\beta \ \textrm{and} \ r_0 \leq |z| \leq r_1\},
$$
$$
Q_R'=\{z \in \C: \arg(z)= \beta \ \textrm{and} \ r_0 \leq |z| \leq r_1\}.
$$
If $\Xi'$ and $\Xi''$ are invariant curves that are embeddings of IETs, then the set $\partial \Xi=Q_L'\cup Q_R' \cup \Xi' \cup \Xi''$ is a Jordan curve. Denote by $\Xi$ the closure of the region bounded by $\partial \Xi$. Numerical investigations suggest that $\Xi$ is an invariant region for $R$. Let $\Xi_k=Q_k \cap \Xi$, where
\begin{equation*}
\begin{array}{l}
\vspace{0.2cm}
Q_1=\{z\in \C: \Im(e^{-i(\alpha+\beta)}(z+(2\lambda-1)e^{i\alpha}))>0 \},\\
\vspace{0.2cm}
Q_2=\{z\in \C: \Im(e^{-i(\alpha+\beta)}(z+(2\lambda-1)e^{i\alpha}))\leq 0 \ \textrm{and} \ \Im(e^{i(\beta-\alpha)}(z-(1-\lambda)e^{i\alpha}))<0 \},\\
\vspace{0.2cm}
Q_3=\{z\in \C:  \Im(e^{i(\beta-\alpha)}(z-(1-\lambda)e^{i\alpha}))\geq 0 \ \textrm{and} \ \Im(e^{-i(\alpha+\beta)}z)> 0 \},\\
Q_4=\{z\in \C: \Im(z e^{-i(\alpha+\beta)})\leq 0 \}.
\end{array}
\end{equation*}
Using the property of the golden mean $1-\lambda=\lambda^2$ it can be seen that $R(z)=R_j(z)$, for $z \in \Xi_j$ where 
\begin{equation}\label{eq40}
R_j(z)=\left\{\begin{array}{ll} 
ze^{i\vartheta_2}+\lambda^3 , & j=1,\\
ze^{i\vartheta_2}- \lambda^4, & j=2,\\
ze^{i\vartheta_2}-\lambda^2 , & j=3,\\
ze^{i\vartheta_1}+\lambda^3, & j=4.
\end{array}
\right.
\end{equation}
 The subsets $\Xi_j$, $j=1,...,4$ and the action of $R$ in this set are depicted in Figure \ref{fig:invstrip}. Note that that $R$ acts isometrically on each $\Xi_j$, but since these sets are not convex $(\Xi,R)$ is not a 4-PWI, but it is simple to construct a 4-PWI $(\C,S)$ such that $\Xi$ is invariant under $S$ and the restriction of $S$ to $\Xi$ is equal to $R$, by partitioning $\C= \bigcup_{j=0}^4 Q_j$ and setting $S(z)=R_j(z)$, for $z \in Q_j$. One can verify that $S$ satisfies the parametric connecting equation \eqref{ce25}, therefore satisfying a necessary condition for the existence of an IET that can be continuously embedded by $h$ in $(\C,S)$, with $Y=h(I)\subseteq \Xi$ also invariant under $R$.

\subsection{A PWI with an embedded four interval exchange} Finally, we show that the map $R$ in \eqref{eq40} is an example of a $4$-PWI for which numerical evidence suggests the existence of a non-trivial embedded $4$-IET.

Consider the family $\mathcal{F}_4$ of four-interval exchange maps $f_{\mu,\pi}:I\rightarrow I$ given by subdividing the interval into four intervals of lengths $\mu=(\mu_1,\mu_2,\mu_3,\mu_4) \in \R_+^4$ with base permutation $\pi=(2)(143)$.

Note that on the real axis $\Im(z)=0$ is a trivial embedding of the (degenerate) four-interval exchange where $\mu=(\lambda,0,0,1)$. Let
$$
Y=\overline{\{R^n(0.416i)\}_{n\in \N}}.
$$
This defines an invariant set which is portrayed in Figure \ref{fig:invariantcurve} that appears to be an embedding of an IET.
We can partition $Y$ by setting $Y_j=Y \cap \Xi_j$, for $j=1,...4$. The length or Lebesgue one dimensional measure $l_j=Leb(Y_j)$ of each $Y_j$ can be numerically estimated to be 
$$l_1 = 0.1217970148, l_2 = 0.1329352086, l_3 = 0.2008884081, l_4 = 0.3550989199$$ 
Fix $\mu = (l_1,l_2,l_3,l_4)$ and consider the IET $(I,f_{\mu,\pi}) \in \mathcal{F}_4$.
Numerical evidence suggests that there is a continuous embedding of $(I,f_{\mu,\pi})$ into $(\C,S)$, by a map $h:I\rightarrow Y$ with $Y \subseteq \Xi$, such that $h(0)=r_0 e^{i\theta_0}$, with $r_0 = 0.47665$, and $\theta_0= 0.68165 \pi$. Indeed numerical verification shows that $i_k'{R(h(0))}=i_k(f_{\mu,\pi}(0))$ for all $k \leq 10^5$, supporting that $h$ is a symbolic embedding.

We can also verify numerically that the condition in Theorem \ref{t:ergodiccondition} holds for this case. Estimating $\xi_j \simeq \frac{e_j(p(8))}{m_j(p(8))}$ where $\xi_1 \simeq 0.718 + 0.125 i$, $\xi_2\simeq 0.538 - 0.512 i$, $\xi_3\simeq 0.460 - 0.438 i$ and $\xi_4\simeq 0.300 - 0.562 i$. For these estimates we get
$$
\left|\sum_{j=1}^{d}\lambda_j\xi_j \mu_j - h(0)\sum_{j=1}^{d}(1-e^{i\theta_j})\xi_j \mu_j\right| \simeq 6.30 \times 10^{-6},
$$
where $\theta_j=\vartheta_2$, for $j=1,2,3$ and $\theta_4=\vartheta_1$.

\begin{figure}[t]
	\includegraphics[width=0.6\linewidth]{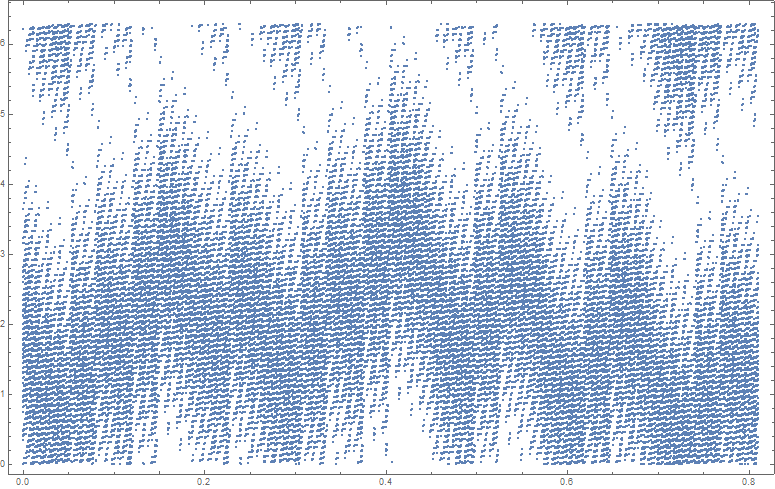}
	\centering
	\captionsetup{width=1\textwidth}
	\caption{First $10^5$ points of the orbit of $(0,0)$, by the tangent exchange map $\Psi$ given by $\mu = (l_1,l_2,l_3,l_4)$, $\pi=(2)(143)$, $\theta_j=\vartheta_2$, for $j=1,2,3$ and $\theta_4=\vartheta_1$. Observe the apparent lack of ergodicity as expected for a non-trivial embedding.}
	\label{fig:Cylinderexchange1}
\end{figure}

Figure \ref{fig:Cylinderexchange1} shows $10^5$ points of the orbit of $(0,0)$, by tangent exchange map $\Psi$ associated to $S$, which is consistent with the orbit being dense but not having nonuniform distribution on the cylinder $I\times S^1$.

\section{Conjectures, questions and conclusions}
\label{sec:discussion}
We have highlighted that embeddings of IETs into PWIs present a number of subtle and mathematically rich problems associated with the regularity or otherwise of these embeddings. Theorem~\ref{thm:trivial2IET} shows that there are no non-trivial continuous embeddings of a minimal $d$-IET into a $d$-PWI, for $d=2$, while Theorem~\ref{t:ergodiccondition} gives a condition for the existence of a piecewise continuous embedding. For $d=4$ there are PWIs that seem to have an abundance of non-trivial embeddings of $d$-IETs. It seems to be much harder to find a $3$-PWI that exhibits non-trivial embeddings of $3$-IETs and to do so requires much parametric fine tuning, a fact that is justified by Theorem \ref{thm:3PWI} which shows that any $3$-PWI has at most one non-trivially continuously embedded minimal $3$-IET with the same underlying permutation.

Our main conjecture is as follows.

\begin{conjecture}
For any $d\geq 3$ there is a minimal $d$-IET $(I,f)$ which admits an embedding a into  $d$-PWI which is continuous but not trivial.
\end{conjecture}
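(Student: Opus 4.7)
The plan is to establish the conjecture constructively by an inductive dimension-raising procedure, using the extended Rauzy-Veech induction of Theorem \ref{RS} in its inverse direction. The starting point would be the cases $d=3$ and $d=4$ observed numerically in Section \ref{sec:example}, so the first step would be to upgrade this numerical evidence to a rigorous proof. Following the computer-assisted strategy of \cite{LV14}, I would look for a renormalization operator on the finite-dimensional space of parameters (with permutation fixed and $(\theta_1,\dots,\theta_d,\lambda_1,\dots,\lambda_d)$ constrained by \eqref{ce22}) whose hyperbolic fixed point encodes a self-similar non-trivial embedding; interval-arithmetic verification of contraction in a neighbourhood of the numerically found parameters would then make the base cases rigorous.

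Second, given a minimal $d$-IET with a non-trivial continuous embedding into some $d$-PWI $(X,T)$, I would construct a minimal $(d+1)$-IET with an analogous embedding by an inverse Rauzy-Veech step: choose an atom $I_j$, split it into two subintervals, and lift the splitting to a subdivision of $X_j \supset h(I_j)$ into two convex pieces along a line meeting $h(I)$ transversally, with the isometry on the new piece dictated by the requirement that one Rauzy-Veech step recover $(X,T)$. Theorem \ref{RS} applied in reverse then guarantees that the enlarged embedding is continuous, and the parametric connecting equation \eqref{ce22} for the enlarged permutation is automatically satisfied by construction; irreducibility of the new permutation can be ensured by a careful choice of $j$ and of the splitting point.

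The hardest step will be ensuring that non-triviality is preserved under this dimension-raising procedure. Lemma \ref{l1s2} shows that trivial continuous embeddings have rigid local form (equal derivative modulus across atoms in the linear case, or equal radius and angular speed in the arc case), so to propagate non-triviality I would track an invariant such as the variation of $\arg h'(x)$ across atoms, or the family of osculating circles to $h|_{I_j}$, and argue that the inverse step cannot homogenize these when the new rotation angle is chosen generically. The example following Theorem \ref{thm:necessary} for $\pi=(2)(143)$ shows that at least for $d=4$ the parametric connecting locus \eqref{ce25} is large enough to admit solutions not of the rigid form arising in the proof of Theorem \ref{thm:trivial2IET}, and an analogous permutation with connected $\mathcal{G}_{\pi}$ and codimension-one connecting locus can be exhibited for every $d\geq 3$.

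The main obstacle, and likely the deepest one, is the absence of an analytic criterion for non-triviality: the proofs of Theorems \ref{thm:trivial2IET} and \ref{thm:3PWI} reveal that triviality can appear through hidden conjugacies that are not visible at the level of the connecting equations alone. A natural preliminary result, of independent interest, would be to characterize triviality through the tangent exchange map \eqref{eq9} of Section \ref{sec:ergodic}: I expect that an embedding is trivial precisely when the rotational cocycle $C^{(\cdot)}$ is a coboundary over $(I,f)$. Granting such a characterization, the conjecture reduces to exhibiting, for each $d\geq 3$, an irreducible $\pi$ and a PWI whose rotational cocycle over $(I,f_{\mu,\pi})$ is not cohomologous to a constant, which places the problem squarely within the well-developed cohomological theory of cocycles over IETs and makes it substantially more tractable.
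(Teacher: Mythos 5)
The statement you are addressing is posed in the paper as an open conjecture; the paper offers no proof of it, only numerical evidence for $d=3$ and $d=4$ in Section~\ref{sec:example}. There is therefore nothing to compare your argument against, and what you have written is a research programme rather than a proof: each of its stages rests on an unestablished claim (a rigorous computer-assisted base case, an ``inverse induction'' step, and a cohomological criterion for triviality that you yourself only say you ``expect'' to hold).

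Beyond that, the central mechanism you propose does not work as stated. Rauzy--Veech induction, as defined in Section~\ref{sec:embeddings} and used in Theorem~\ref{RS}, is a first return map to a subinterval and sends a $d$-IET to another $d$-IET; it does not change the number of exchanged intervals. Consequently there is no ``inverse Rauzy--Veech step'' that produces a $(d+1)$-IET from a $d$-IET, and Theorem~\ref{RS} cannot be ``applied in reverse'' to guarantee continuity of an enlarged embedding: the theorem is a one-directional implication from an embedding of $(I,f)$ to an embedding of the induced system, and even its converse for fixed $d$ is not proved in the paper. Splitting an atom $I_j$ and the corresponding $X_j$ into two pieces, as you suggest, either leaves the dynamics unchanged (if the isometry on both pieces agrees, so the new permutation is not genuinely a $(d+1)$-exchange) or alters the map on part of $h(I_j)$, in which case $h(I)$ need no longer be invariant and nothing forces the new system to admit any embedding at all. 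Finally, even granting a dimension-raising construction, your argument that non-triviality survives is only a heuristic (``choose the new rotation angle generically''); the paper's Theorems~\ref{thm:trivial2IET} and~\ref{thm:3PWI} show that triviality can be forced by conjugacies invisible at the level of the connecting equations, so genericity alone is not an argument. The conjecture remains open.
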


Assuming this is conjecture is valid, a number of interesting lines of enquiry open up:
\begin{itemize}
\item For a given IET $(I,f)$, what is the structure of the PWIs $(X,T)$ that carry continuous embeddings of $(I,f)$, and how can the irregularity of the continuous embeddings be characterised within this class?
\item For a given PWI $(X,T)$, what is the structure of the IETs $(I,f)$ that are embedded within this PWI?
\item So far we have considered continuous embeddings of $d$-IETs into $d$-PWIs but in principle a $d$-IET may be embedded into a $d'$-PWI for some $d'<d$: see for example \cite[Figure 8]{AG06a}. How can we understand these embeddings within PWIs with fewer atoms?
\item What is the structure of parametrizations of $d$-PWIs that embed the same given IET?
\end{itemize}
We suspect that for a continuous non-trivial embedding $h$ of $(I,f)$ into $(X,T)$, typical embeddings have a tangent exchange map that is minimal but not ergodic. A lot of insight has come from examples (see eg \cite{AG06a}) and we suggest that the example introduced in Section~\ref{sec:example} will be useful to explore the above in that it has a larger number of apparent embeddings that limit to the baseline.

We do not consider the case of $(I,f)$ that are not minimal, or that are reducible, but there may be some surprises waiting there as well. The region $\Xi$ discussed in Section~\ref{sec:example} seems to contain periodic islands, embedded IETs and other invariant sets that are neither. It is a challenge to describe these other invariant sets in a coherent way. Regarding the IETs embedded in $\Xi$ we conjecture that all minimal nearby IETs in $\mathcal{F}_4$ are continuously (or at least symbolically) embedded. 


\end{document}